\def\p{\partial}
\def\o{\overline}
\def\b{\bar}
\def\mb{\mathbb}
\def\mc{\mathcal}
\def\mr{\mathrm}
\def\n{\nabla}
\def\wt{\widetilde}
\theoremstyle{plain}
\newtheorem{theorem}{Theorem}[section]
\theoremstyle{plain}
\newtheorem{definition}[theorem]{Definition}
\theoremstyle{plain}
\theoremstyle{remark}
\newtheorem{remark}[theorem]{Remark}
\theoremstyle{remark}
\theoremstyle{remark}
\newtheorem{question}[theorem]{Question}
\theoremstyle{plain}
\newtheorem{proposition}[theorem]{Proposition}
\numberwithin{equation}{section}
\theoremstyle{plain}
\newtheorem{example}[theorem]{Example}
\theoremstyle{plain}
\newtheorem{corollary}[theorem]{Corollary}
\begin{document}

\title{Positivity of Schur forms for strongly decomposably positive vector bundles}

\author{Xueyuan Wan}
\address{Xueyuan Wan: Mathematical Science Research Center, Chongqing University of Technology, Chongqing 400054, China.}
\email{xwan@cqut.edu.cn}

\date{\today}

\begin{abstract}

In this paper, we define two types of strongly decomposable positivity, which serve as generalizations of (dual) Nakano positivity and are stronger than the decomposable positivity introduced by S. Finski. We provide the criteria for strongly decomposable positivity of type I and type II and prove that the Schur forms of a strongly decomposable positive vector bundle of type I are weakly positive, while the Schur forms of a strongly decomposable positive vector bundle of type II are positive. These answer a question of Griffiths affirmatively for strongly decomposably positive vector bundles. Consequently, we present an algebraic proof of the positivity of Schur forms for (dual) Nakano positive vector bundles, which was initially proven by S. Finski.

\end{abstract}

 \subjclass[2020]{32Q10, 32L10, 53C65}  
 \keywords{Schur forms, positivity, weak positivity, strongly decomposable positivity, (dual) Nakano positivity}

  \thanks{Research of Xueyuan Wan is partially supported by the National Natural Science Foundation of China (Grant No. 12101093) and the Natural Science Foundation of Chongqing (Grant No. CSTB2022NSCQ-JQX0008), the Scientific Research Foundation of the Chongqing University of Technology.}
\maketitle

\tableofcontents

\section{Introduction}

Let $(E,h^E)$ be a Hermitian holomorphic vector bundle of rank $r$ over a complex manifold $X$ of dimension $n$. The Chern forms $c_i(E,h^E)$ of degree $2i$, $0\leq i\leq r$, and the total Chern form $c(E,h^E)$ are defined by 
\begin{equation*}
 c(E,h^E):= \sum_{i=0}^rc_i(E,h^E):=\det\left(\mr{Id}_E+\frac{\sqrt{-1}}{2\pi}R^E\right),
\end{equation*}
where $R^E\in A^{1,1}(X,\mr{End}(E))$ denotes the Chern curvature of $(E,h^E)$. For any $k\in \mb{N}$ with $1\leq k\leq n$,
 let $\Lambda(k, r)$ be the set of all the partitions of $k$ by non-negative integers less than or equal to $r$, i.e., any element $\lambda=(\lambda_1,\cdots,\lambda_k) \in \Lambda(k, r)$ satisfying
$$
r \geqslant \lambda_1 \geqslant \lambda_2 \geqslant \cdots \geqslant \lambda_k \geqslant 0\text{ and }|\lambda|=\sum_{i=1}^k \lambda_i=k.
$$
 Each partition $\lambda \in \Lambda(k, r)$ gives rise to a Schur form by 
\begin{equation*}
  P_\lambda(c(E,h^E)):=\det(c_{\lambda_i-i+j}(E,h^E))_{1\leq i,j\leq k},
\end{equation*}
which is a closed real $(k,k)$-form.
The Schur forms contain the Chern forms and the signed Segre forms as special examples, e.g., 
\begin{equation*}
  P_{(k,0,\cdots,0)}(c(E,h^E))=c_k(E,h^E)
\end{equation*}
and 
\begin{equation*}
   P_{(1,\cdots,1,0,\cdots,0)}(c(E,h^E))=(-1)^ks_k(E,h^E).
\end{equation*}

In \cite[Page 129, Conjecture (0.7)]{Griffiths}, Griffiths conjectured the numerical positivity of Griffiths positive vector bundles (see \eqref{Griffiths-positive} for a definition), that is, if $(E,h^E)$ is a Griffiths positive vector bundle, then 
\begin{equation}\label{num-pos}
  \int_V P(c_1,\cdots,c_s)>0
\end{equation}
where $P(c_1,\cdots,c_s)$ is a positive polynomial in the Chern classes $c_1,\cdots, c_s$ of any quotient bundle $Q$ of $E|_V$, $V\subset X$ is any complex analytic subvariety. Bloch-Gieseker \cite{MR297773} proved that all Chern classes of an ample vector bundle satisfy \eqref{num-pos}, Fulton-Lazarsfeld \cite[Theorem I]{FL} extended Bloch-Gieseker's result and proved all Schur polynomials are numerically positive for ample vector bundles. For nef vector bundles over compact K\"ahler manifolds, Demailly-Peternell-Schneider \cite[Theorem 2.5]{MR1257325} proved the numerical semi-positivity of all Schur polynomials. 

Griffiths \cite[Page 247]{Griffiths} also conjectured \eqref{num-pos} holds on the level of the differential forms, which can be reformulated as follows, see \cite[Page 1541, Question of Griffiths]{Fin}.
\begin{question}[Griffiths]\label{Question}
	Let $P \in \mathbb{R}\left[c_1, \ldots, c_r\right]$ be a non-zero non-negative linear combination of Schur polynomials of weighted degree $k$. Are the forms $P\left(c_1(E, h^E), \ldots, c_r(E, h^E)\right)$ weakly positive for any Griffiths positive vector bundle $(E, h^E)$ over a complex manifold $X$ of dimension $n, n \geqslant k$?
\end{question}
Recall that a real $(k,k)$-form $u$ is called weakly positive (resp. non-negative) if 
$u\wedge (\sqrt{-1})^{(n-k)^2}\beta\wedge \o{\beta}>0$ (resp. $\geq 0$) for any non-zero decomposable $(n-k,0)$-form $\beta=\beta_1\wedge \cdots\wedge\beta_{n-k}$, where $\beta_i$, $1\leq i\leq n-k$, are $(1,0)$-forms, see Definition \ref{various positivity} for the definitions of (weakly) positive (resp. non-negative) vector bundles. 

Griffiths \cite[Page 249]{Griffiths} proved that the second Chern form of a Griffiths positive vector bundle is positive by using Schwarz inequality. Guler \cite[Theorem 1.1]{MR2932990} verified Question \ref{Question} for all signed Segre forms, and Diverio-Fagioli \cite{DF} showed the positivity of several other polynomials in the Chern forms of a Griffiths (semi)positive vector bundle by considering the pushforward of a flag bundle, including the later developments \cite{Fag20, Fag22}. See Xiao \cite{MR4361967} and Ross-Toma \cite{RT} for other related results of ample vector bundles.

For Bott-Chern non-negative vector bundles, Bott-Chern \cite[Lemma 5.3, (5.5)]{MR185607} proved that all Chern forms are non-negative, Li \cite[Proposition 3.1]{MR4263677} extended Bott-Chern's result and obtained all Schur forms are non-negative. Later, Finski \cite[Theorem 2.15]{Fin} proved the equivalence of Bott-Chern non-negativity and dual Nakano non-negativity. Moreover, using a purely algebraic method, Finski \cite[Section 3.4]{Fin} proved that all Schur forms of a Nakano non-negative vector bundle are non-negative. For (dual) Nakano positive vector bundles, Finski \cite[Theorem 1.1]{Fin} proved that all Schur forms are positive by the refinement of the determinantal formula of Kempf-Laksov on the level of differential forms. However, as pointed out by Finski \cite[Remark 3.18]{Fin}, the above algebraic method can be used to deal with the case of non-negativity, while for the positivity statement, it is not clear if one can refine the algebraic method because there is no similar criterion for (dual) Nakano positivity (see \cite[Remark 2.16]{Fin}) and there is little known about the specific structure of the forms defined in \cite[(3.83)]{Fin}. This motivates the author to study the question of Griffiths (Question \ref{Question}) by developing the purely algebraic method. 

In \cite[Section 2.3]{Fin}, Finski introduced the definition of decomposably positive vector bundles, see Definition \ref{decom-pos},  which is a generalization of both Nakano positivity and dual Nakano positivity, and coincides with Griffiths positivity for $n\cdot r\leq 6$. So it is natural to wonder if Question \ref{Question} holds for such positive vector bundles. In this paper, we introduce two new notions of positivity of vector bundles, called strongly decomposable positivity of type I and type II, see Definition \ref{strongly} and Definition \ref{stongly-II}. They fall in between (dual) Nakano positivity and decomposable positivity. Roughly speaking, $(E,h^E)$ is strongly decomposably positive of type I if, for any $x\in X$, there is a decomposition $T_x^{1,0}X=U_x\oplus V_x$ such that it is Nakano positive in the subspaces  $E_x\otimes U_x$ and dual Nakano positive in the subspace $\o{E}_x\otimes V_x$, and the cross curvature terms vanish, see Definition \ref{strongly}. Using the purely algebraic method, we answer Question \ref{Question} affirmatively for strongly decomposably positive vector bundles of type I. 
\begin{theorem}\label{main-thm}
	Let $(E,h^E)$ be a strongly decomposably positive vector bundle of type I over a complex manifold $X$, $\mathrm{rank}E=r$, and $\dim X=n$. Then the Schur form 
	$P_\lambda(c(E,h^E))$ is weakly positive for any partition $\lambda\in \Lambda(k,r)$, $k\leq n$ and $k\in\mb{N}$.
\end{theorem}

The proof of the theorem above primarily involves presenting an equivalent characterization of a strongly decomposably positive vector bundle of type I. While previous algebraic methods can handle the non-negative cases, for the strictly positive situations, by utilizing our equivalent characterization, we can derive a contradiction if the Schur form is not weakly positive.

	From \cite[Theorem 1.2]{RA}, a real $(k,k)$-form $u$ is non-negative if and only if $u$ can be written as 
	\begin{equation}\label{non-negative}
  u=\sum_{s=1}^N (\sqrt{-1})^{k^2}\alpha_s\wedge \o{\alpha_s}
\end{equation}
 for some $(k,0)$-forms $\alpha_s,1\leq s\leq N$. By \eqref{psi} and \eqref{schur forms}, the Schur form has the following form
\begin{equation}\label{Schur-form0}
   P_\lambda(c(E,h^E))=\left(\frac{1}{2\pi}\right)^k\left(\frac{1}{k!}\right)^2(\sqrt{-1})^{k^2}\sum_{\rho,t,c,\epsilon}(-1)^{|\epsilon|+k}\psi_{\rho t c\epsilon}\wedge \o{\psi_{\rho t c\epsilon}}.
\end{equation}
where $\psi_{\rho t c\epsilon}$ is a $(|\epsilon|,k-|\epsilon|)$-form and is defined by
\begin{equation}\label{psi0}
  \psi_{\rho t c\epsilon}:=\sum_{\sigma\in S_k}q_{\sigma t}\bigwedge_{j=1}^k(\o{B_{\rho_{\sigma(j)}c_j}})^{\epsilon_j}\wedge (\o{A_{\rho_{\sigma(j)}c_j}})^{1-\epsilon_j}.
\end{equation}
It is not clear how to express \eqref{Schur-form0} as in \eqref{non-negative} in the general case. Hence, it seems hard to prove that the Schur form $P_\lambda(c(E,h^E))$ is a positive $(k,k)$-form by using the algebraic method. However, if $(E,h^E)$ is Nakano positive or dual Nakano positive, it is equivalent to $A=0$ or $B=0$. For example, for $A=0$, \eqref{psi0} gives 
\begin{equation*}
  \psi_{\rho t c \epsilon_1}=\sum_{\sigma\in S_k}q_{\sigma t}\bigwedge_{j=1}^k\o{B_{\rho_{\sigma(j)}c_j}},\quad \epsilon_1=(1,\cdots,1)
\end{equation*}
and $\psi_{\rho t c \epsilon}=0\text{ for any }\epsilon\neq \epsilon_1$. Then the Schur form is given by
\begin{equation*}
  P_\lambda(c(E,h^E))=\left(\frac{1}{2\pi}\right)^k\left(\frac{1}{k!}\right)^2(\sqrt{-1})^{k^2}\sum_{\rho,t,c}\psi_{\rho t c \epsilon_1}\wedge \o{\psi_{\rho t c\epsilon_1}},
\end{equation*}
 which satisfies \eqref{non-negative} because $ \psi_{\rho t c \epsilon_1}$ is a $(k,0)$-form.
As a result, we can give an algebraic proof of the following positivity of Schur forms for (dual) Nakano positive vector bundles. 
\begin{theorem}[{Finski \cite[Theorem 1.1]{Fin}}]\label{main-cor}
	Let $(E, h^E)$  be a (dual) Nakano positive vector bundle of rank $r$ over a complex manifold $X$ of dimension $n$. Then for any $k \in \mathbb{N}$, $k \leqslant n$, and $\lambda \in \Lambda(k, r)$, the $(k, k)$-form $P_\lambda\left(c\left(E, h^E\right)\right)$ is positive.
\end{theorem}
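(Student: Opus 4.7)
My plan is to read off the positivity of $P_\lambda(c(E,h^E))$ directly from the expression \eqref{Schur-form0} after specializing to the (dual) Nakano positive setting. The starting point, already flagged in the discussion preceding the statement, is that Nakano positivity permits a choice of local frames so that, in the curvature decomposition underlying \eqref{psi0}, one has $A=0$ (and dually $B=0$ in the dual Nakano case). Because $\psi_{\rho t c \epsilon}$ in \eqref{psi0} is a wedge of $|\epsilon|$ factors of type $\overline{B}$ and $k-|\epsilon|$ factors of type $\overline{A}$, this forces every summand of \eqref{Schur-form0} to vanish unless $\epsilon=\epsilon_1:=(1,\dots,1)$ (respectively $\epsilon_0:=(0,\dots,0)$), and on the surviving summand the sign $(-1)^{|\epsilon|+k}$ equals $+1$.

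After this collapse, each surviving $\psi_{\rho t c \epsilon_1}$ is a $(k,0)$-form and the Schur form takes the shape
\[
P_\lambda(c(E,h^E)) \;=\; \frac{1}{(2\pi)^k (k!)^2}\,(\sqrt{-1})^{k^2}\sum_{\rho,t,c} \psi_{\rho t c \epsilon_1}\wedge\overline{\psi_{\rho t c \epsilon_1}}.
\]
A sum of this form is automatically non-negative by the criterion \eqref{non-negative}. The remaining step is to promote this non-negativity to strict positivity in the sense of Definition \ref{various positivity}: for any non-zero decomposable $(n-k,0)$-form $\beta=\beta_1\wedge\cdots\wedge\beta_{n-k}$, I would produce at least one index triple $(\rho,t,c)$ with $\psi_{\rho t c \epsilon_1}\wedge\beta\neq 0$, since this makes $P_\lambda\wedge(\sqrt{-1})^{(n-k)^2}\beta\wedge\overline{\beta}$ a strictly positive volume form.

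For this non-degeneracy step, the plan is to use Nakano positivity in its strict form at a point $x$: the Hermitian form $\langle\sqrt{-1}R^E\xi,\xi\rangle$ on $E_x\otimes T^{1,0}_xX$ is positive definite, which forces the family of $(1,0)$-forms $\{\overline{B_{ai}}\}$ produced by the spectral decomposition of $R^E$ to span $T^{*1,0}_xX$ (a weaker but here sufficient consequence). A short linear-algebraic argument on wedge products of covectors then guarantees that for a fixed non-zero decomposable $\beta$ at least one $k$-fold wedge $\bigwedge_{j=1}^k \overline{B_{\rho_{\sigma(j)}c_j}}$ survives wedging with $\beta$, which yields the required strict positivity. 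The dual Nakano case is symmetric, with the roles of $A$ and $B$ interchanged and $\epsilon_0$ in place of $\epsilon_1$. I expect this non-degeneracy step, which is what actually upgrades non-negativity to strict positivity, to be the main obstacle, since non-negativity of the sum in the display above is essentially free while positivity depends on extracting the precise spanning content of Nakano positivity from \eqref{psi0}.
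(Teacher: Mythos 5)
Your reduction to the case $A=0$, the collapse of \eqref{psi0} to the single index $\epsilon_1=(1,\dots,1)$, and the resulting expression for $P_\lambda(c(E,h^E))$ as $(\sqrt{-1})^{k^2}\sum_{\rho,t,c}\psi_{\rho tc\epsilon_1}\wedge\overline{\psi_{\rho tc\epsilon_1}}$ up to a positive constant all match the paper's argument (see \eqref{schur forms 1}), and the non-negativity is indeed immediate from \eqref{non-negative}. The non-degeneracy step, which you correctly identify as the crux, is however where your sketch has two genuine gaps. First, a smaller one: the theorem asserts positivity in the sense of Definition \ref{various positivity}, i.e.\ $P_\lambda\wedge(\sqrt{-1})^{(n-k)^2}\beta\wedge\overline{\beta}>0$ for \emph{every} non-zero $(n-k,0)$-form $\beta$, not only decomposable ones; testing only against decomposable $\beta$ would prove weak positivity, which is strictly weaker for $2\le k\le n-2$. (The paper's argument handles arbitrary $\eta$ because all it needs is one multi-index with $dz^{\beta_1}\wedge\cdots\wedge dz^{\beta_k}\wedge\eta\neq 0$.)

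The more serious gap is in the linear algebra. Showing that some individual wedge $\bigwedge_{j=1}^k\overline{B_{\rho_{\sigma(j)}c_j}}$ survives wedging with $\beta$ does not show that $\psi_{\rho tc\epsilon_1}\wedge\beta\neq 0$: by \eqref{psi}, $\psi_{\rho tc\epsilon_1}$ is the weighted sum $\sum_{\sigma\in S_k}q_{\sigma t}\bigwedge_j\overline{B_{\rho_{\sigma(j)}c_j}}$, and the $k!$ terms can cancel. Moreover, the input you propose to use --- that the covectors $\overline{B_{ip}}$ span $T^{*1,0}_xX$ --- is strictly weaker than Nakano positivity, which by Corollary \ref{corNakano} is equivalent to the full-rank condition $\mathrm{rank}(\mathbf{B})=rn$ on the $N\times rn$ matrix indexed by pairs $(i,\alpha)$; spanning alone cannot rule out cancellation, and for a general coefficient system $q_{\sigma t}$ the sums $\psi_{\rho tc\epsilon_1}\wedge\beta$ genuinely can all vanish. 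The paper's proof uses the invertibility of $\mathbf{B}$ to contract away the $B$'s (multiplying \eqref{psivan1} by entries of $\mathbf{B}^{-1}$ and summing over $c$), reducing the hypothetical vanishing to the purely combinatorial identity \eqref{psi4}, $\sum_{\sigma}q_{\sigma t}\delta_{\rho_{\sigma(1)}l_1}\cdots\delta_{\rho_{\sigma(k)}l_k}=0$, which is then shown to be equivalent to $P_\lambda(I_r)=0$ via \eqref{psi6} and refuted by the hook-content evaluation $P_\lambda(I_r)=s_{\lambda'}(1,\dots,1)\geq 1$ in \eqref{contr}. That final ingredient --- the non-vanishing of the Schur polynomial at the identity matrix, which is where the hypothesis that $P_\lambda$ is a Schur polynomial (rather than an arbitrary invariant polynomial) actually enters --- is entirely absent from your sketch, and without it the strict positivity cannot be concluded.
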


Inspired by the definition of the strongly decomposable positivity of type I,
it is natural to define the strongly decomposable positivity of type II by decomposing the vector bundle, which is the direct sum of Nakano positive and dual Nakano positive vector bundles point-wisely, see Definition \ref{stongly-II}. By Littlewood-Richardson rule (see \cite[Chapter 5]{MR1464693}), the Schur class of direct sum $E\oplus F$ can be given by
\begin{equation*}
  P_\lambda(c(E\oplus F))=\sum_{\mu,\nu}c^\lambda_{\mu\nu}P_\mu(c(E))P_\nu(c(F)),
\end{equation*}
where $c^\lambda_{\mu\nu}(\geq0)$ is a Littlewood-Richardson coefficient.
In this paper, by refining the above identity on the level of differential forms and using Theorem \ref{main-cor}, we obtain
\begin{theorem}\label{main theorem2}
	Let $(E,h^E)$ be a strongly decomposably positive vector bundle of type II over a complex manifold $X$, $\mathrm{rank} E=r$, and $\dim X=n$. Then the Schur form 
	$P_\lambda(c(E,h^E))$ is positive for any partition $\lambda\in \Lambda(k,r)$, $k\leq n$ and $k\in\mb{N}$.
\end{theorem}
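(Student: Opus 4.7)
The plan is to refine the Littlewood--Richardson decomposition of Schur polynomials to the level of differential forms and then reduce to Theorem \ref{main-cor}. By the definition of strongly decomposable positivity of type II, at every point $x\in X$ the bundle decomposes orthogonally as $E_x=(E_1)_x\oplus(E_2)_x$ in such a way that the Chern curvature $R^E_x$ is block-diagonal, with first block $R_1$ of Nakano positive type and second block $R_2$ of dual Nakano positive type. Because the curvature is block-diagonal, the total Chern form factors pointwise as
\begin{equation*}
c(E,h^E)=c(E_1,h^{E_1})\wedge c(E_2,h^{E_2}),
\end{equation*}
where $c(E_j,h^{E_j}):=\det(\mr{Id}+\frac{\sqrt{-1}}{2\pi}R_j)$. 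Note that all Chern forms have even total degree and thus commute under wedge.

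Next, I would lift the Littlewood--Richardson identity to the form level. The classical polynomial identity
\begin{equation*}
P_\lambda(c(E\oplus F))=\sum_{\mu,\nu}c^\lambda_{\mu\nu}P_\mu(c(E))\,P_\nu(c(F))
\end{equation*}
holds in any commutative ring containing the Chern variables. Since the Chern forms commute under wedge, this carries over verbatim to yield the pointwise identity of forms
\begin{equation*}
P_\lambda(c(E,h^E))=\sum_{\mu,\nu}c^\lambda_{\mu\nu}\,P_\mu(c(E_1,h^{E_1}))\wedge P_\nu(c(E_2,h^{E_2})),
\end{equation*}
with $c^\lambda_{\mu\nu}\in\mb{Z}_{\geq 0}$. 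By Theorem \ref{main-cor}, each $P_\mu(c(E_1,h^{E_1}))$ is positive (since $E_1$ is Nakano positive) and each $P_\nu(c(E_2,h^{E_2}))$ is positive (since $E_2$ is dual Nakano positive). A short sign calculation using \eqref{non-negative} shows that if $u=\sum_s(\sqrt{-1})^{p^2}\alpha_s\wedge\o{\alpha_s}$ and $v=\sum_t(\sqrt{-1})^{q^2}\beta_t\wedge\o{\beta_t}$ are forms of types $(p,p)$ and $(q,q)$ respectively, with $\alpha_s,\beta_t$ of pure types $(p,0)$ and $(q,0)$, then
\begin{equation*}
u\wedge v=\sum_{s,t}(\sqrt{-1})^{(p+q)^2}(\alpha_s\wedge\beta_t)\wedge\o{(\alpha_s\wedge\beta_t)},
\end{equation*}
so the structure \eqref{non-negative} is preserved by wedge. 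Hence $P_\lambda(c(E,h^E))$ is a non-negative combination of positive $(k,k)$-forms containing at least one strictly positive summand, and is therefore positive.

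The main obstacle is the refinement of the Littlewood--Richardson identity to differential forms. While it is essentially formal once commutativity of Chern forms is observed, the cleanest route is to invoke a splitting-principle argument at the form level (treating eigenvalues of the curvature as Chern roots) and to check that no step implicitly uses non-commuting variables. A secondary subtlety is confirming that wedge products of positive forms remain positive in the paper's sense (not only non-negative); this is the content of the sign calculation above together with a short case analysis on the ranks of $E_1$ and $E_2$ to ensure that at least one $(\mu,\nu)$ with $c^\lambda_{\mu\nu}>0$ contributes a strictly positive summand.
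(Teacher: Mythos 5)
Your overall strategy coincides with the paper's: decompose $P_\lambda(c(E,h^E))$ pointwise via the Littlewood--Richardson identity into $\sum_{\mu,\nu}c^\lambda_{\mu\nu}P_\mu(c(E_1,h^{E_1}))\wedge P_\nu(c(E_2,h^{E_2}))$, apply Theorem \ref{main-cor} to each factor, and conclude from positivity of wedge products together with the existence of a summand with $c^\lambda_{\mu\nu}=1$. The one methodological difference is the justification of the form-level identity: the paper's Proposition \ref{prop:product} proves it by a universal-coefficients argument tested on direct sums of powers of an ample line bundle (following Guler and Diverio--Fagioli), whereas you observe that it is a formal identity in commuting variables. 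Your route is legitimate and more direct, provided you state precisely that the identity holds in the free polynomial ring $\mathbb{Q}[c_1(E),\dots,c_r(E),c_1(F),\dots,c_q(F)]$ (with $c_0=1$ and $c_i=0$ out of range) before substituting even-degree forms; the vague appeal to ``a splitting-principle argument at the form level, treating eigenvalues of the curvature as Chern roots'' should be replaced by this algebraic statement, since the curvature matrix has $2$-form entries and has no useful eigenvalues.

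There are two genuine gaps. First, your sign computation only shows that $u\wedge v$ again has the shape \eqref{non-negative}, i.e.\ that the wedge of positive forms is \emph{non-negative}; it does not yield strict positivity, and no case analysis on ranks repairs this. The missing step is the paper's Proposition \ref{product}: if $u\wedge v\wedge(\sqrt{-1})^{(n-k-l)^2}\beta\wedge\o{\beta}=0$ for some nonzero $\beta$, then writing $v=\sum_s(\sqrt{-1})^{l^2}\alpha_s\wedge\o{\alpha_s}$ and using the positivity of $u$ forces $\alpha_s\wedge\beta=0$ for all $s$, whence $v\wedge(\sqrt{-1})^{(n-k-l)^2}\beta\wedge\o{\beta}=0$, contradicting the positivity of $v$. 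Without this, your final sentence only proves non-negativity. Second, the decomposition $E_x=E_{1,x}\oplus E_{2,x}$ is pointwise, so $E_1$ and $E_2$ are not holomorphic subbundles and Theorem \ref{main-cor} does not literally apply to them. The paper patches this by constructing, on a coordinate neighborhood $U$ of $x$, the trivial holomorphic bundles $E_j=U\times E_{j,x}$ with the metric $h^{E_1}(e_i,e_j)=\delta_{ij}-R_{i\b{j}\alpha\b{\beta}}z^\alpha\b{z}^\beta$ (and similarly for $E_2$), whose Chern curvature at $x$ equals the prescribed block; this is a short fix, but it must be said. Finally, in choosing the distinguished summand you should exhibit partitions with $\lambda_i=\mu_i+\nu_i$ for all $i$, $\mu_1\leq\mathrm{rank}\,E_1$ and $\nu_1\leq\mathrm{rank}\,E_2$, so that $c^\lambda_{\mu\nu}=1$ and neither factor vanishes; this is possible because $\lambda_1\leq r=\mathrm{rank}\,E_1+\mathrm{rank}\,E_2$.
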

\begin{remark}
Comparing Theorem \ref{main-thm} with Theorem \ref{main theorem2}, it is natural to ask if the Schur forms are positive for a strongly decomposably vector bundle of type I.	
\end{remark}

The article is organized as follows: In Section \ref{sec:strongly}, we will define two types of strongly decomposably positive vector bundles, which are the generalizations of both Nakano positivity and dual Nakano positivity, and are stronger than decomposable positivity.
In Section \ref{sec:positivity}, we will recall the positivity notions for differential forms and show the positivity of the product of two positive forms. In Section \ref{sec:pos}, we will give a criterion of a strongly decomposably positive vector bundle of type I, recall the definitions of Schur forms and Griffiths cone, then prove the weak positivity of Schur forms, Theorem \ref{main-thm} and Theorem \ref{main-cor} are established in this section. In Section \ref{strongly positive II}, we will give a criterion of a strongly decomposably positive vector bundle of type II and prove the positivity of Schur forms, Theorem \ref{main theorem2} is established in this section.

\vspace{5mm}
\noindent{\it Acknowledgements.} The author thanks Siarhei Finski for helpful discussions and anonymous referees for valuable comments that improved our article.

\section{Strongly decomposably positive vector bundles}\label{sec:strongly}

This section will define two types of strongly decomposably positive vector bundles.

\subsection{Connections and curvatures}

In this subsection, we will recall the definitions of the Chern connection and its curvature for a Hermitian holomorphic vector bundle. One can refer to \cite[Chapter 1]{Kobayashi+1987} for more details. We will use the Einstein summation convention in this paper. 

Let $\pi:(E,h^E)\to X$ be a Hermitian holomorphic vector bundle over a complex manifold $X$, $\mr{rank} E=r$ and $\dim X=n$. Let $\n^E$ be the Chern connection of $(E,h^E)$, which preserves the metric $h^E$ and is of $(1,0)$-type. With respect to a local holomorphic frame $\{e_i\}_{1\leq i\leq r}$ of $E$, one has
\begin{equation}\label{Chern-connection}
  \n^E e_i=\theta^j_i e_j,
\end{equation}
where $\theta=(\theta^j_i)$ ($j$ row, $i$ column) is the connection form of $\n^E$.
 More precisely, 
\begin{equation*}
  \theta^j_i=\p h_{i\b{k}}h^{\b{k}j},
\end{equation*}
where $h_{i\b{k}}:=h(e_i,{e_k})$. In terms of matrix form, it is 
\begin{equation*}
  \theta^\top=\p h\cdot h^{-1}.
\end{equation*}
 Considering $e=(e_1,\cdots,e_r)$ as a row vector, then \eqref{Chern-connection} can be written as 
\begin{equation*}
  \n^E e=e\cdot\theta.
\end{equation*}
Let $R^E=(\n^E)^2\in A^{1,1}(X,\mr{End}(E))$ denote the Chern curvature of $(E,h^E)$, and write 
\begin{equation*}
  R^E=R^j_ie_j\otimes e^i \in A^{1,1}(X,\mr{End}(E)),
\end{equation*}
where $R=(R^j_i)$ ($j$ row, $i$ column) is the curvature matrix whose entries are $(1,1)$-forms,  $\{e^i\}_{1\leq i\leq r}$ denotes the dual frame of $\{e_i\}_{1\leq i\leq r}$. The curvature matrix $R=(R^j_i)$ is given by
\begin{equation*}
  R^{j}_i=d\theta^j_i+\theta^j_k\wedge \theta^k_i=\b{\p}\theta^j_i.
\end{equation*}
 If $\{\tilde{e}_i\}_{1\leq i\leq r}$ is another local holomorphic frame of $E$ with 
$\tilde{e}_i=a_i^je_j$, then 
\begin{equation*}
  \tilde{e}=e\cdot a,
\end{equation*}
where $\tilde{e}:=(\tilde{e}_1,\cdots,\tilde{e}_r)$ and $a=(a^i_j)$. Denote by $\wt{R}$ the curvature matrix  with respect to the local frame $\{\tilde{e}_i\}_{1\leq i\leq r}$. Then
\begin{equation}\label{Tran}
  \wt{R}=a^{-1}\cdot R\cdot a.
\end{equation}
Let $\{z^\alpha\}_{1\leq\alpha\leq n}$ be local holomorphic coordinates of $X$. Write  
\begin{equation*}
  R^j_i=R^j_{i\alpha\b{\beta}}dz^\alpha\wedge d\b{z}^\beta
\end{equation*}
and denote
\begin{equation*}
  R_{i\b{j}}:=R^{k}_{i}h_{k\b{j}}=R_{i\b{j}\alpha\b{\beta}}dz^\alpha\wedge d\b{z}^\beta,
\end{equation*}
so that
\begin{equation*}
  R_{i\b{j}\alpha\b{\beta}}=R^k_{i\alpha\b{\beta}}h_{k\b{j}}=-\p_\alpha\p_{\b{\beta}}h_{i\b{j}}+h^{\b{l}k}\p_\alpha h_{i\b{l}}\p_{\b{\beta}}h_{k\b{j}},
\end{equation*}
where $\p_\alpha:=\p/\p z^\alpha$ and $\p_{\b{\beta}}:=\p/\p\b{z}^\beta$.

\subsection{Strongly decomposable positivity}

This subsection will define two types of strongly decomposably positive vector bundles.
Firstly, we recall the following definitions of Nakano positive and dual Nakano positive vector bundles. 
 \begin{definition}[(Dual) Nakano positive] 
 	A Hermitian holomorphic vector bundle $(E,h^E)$ is called Nakano positive (resp. non-negative) if 
 	\begin{equation*}
  R_{i\b{j}\alpha\b{\beta}}u^{i\alpha}\o{u^{j\beta}}>0 \,(\text{resp. } \geq 0 )
\end{equation*}
 for any non-zero element $u=u^{i\alpha}e_i\otimes \p_\alpha\in E\otimes T^{1,0}X$. 
 $(E,h^E)$ is called dual Nakano positive (non-negative) if 
 \begin{equation*}
  R_{i\b{j}\alpha\b{\beta}}v^{\b{j}\alpha}\o{v^{\b{i}\beta}}>0 \,(\text{resp. } \geq 0 )
\end{equation*}
for any non-zero element $v=v^{\b{j}\alpha}\b{e}_j\otimes \p_\alpha\in \o{E}\otimes T^{1,0}X$.
 \end{definition}
In \cite[Definition 2.18]{Fin}, S. Finski introduced the following new notion of positivity for vector bundles: decomposable positivity. 
\begin{definition}[Decomposably positive]\label{decom-pos}
	A Hermitian vector bundle $\left(E, h^E\right)$ is called decomposably non-negative if for any $x \in X$, there is a number $N \in \mathbb{N}$ and linear (respectively sesquilinear) forms $l_p^{\prime}: T_x^{1,0} X \otimes E_x \rightarrow \mathbb{C}$ (respectively $l_p: T_x^{1,0} X \otimes E_x \rightarrow \mathbb{C}$), $p=1, \ldots, N$, such that for any $v \in T_x^{1,0} X, \xi \in E_x$, we have
$$
\frac{1}{2 \pi}\left\langle R_x^E(v, \bar{v}) \xi, \xi\right\rangle_{h^E}=\sum_{p=1}^N\left|l_p(v, \xi)\right|^2+\sum_{p=1}^N\left|l_p^{\prime}(v, \xi)\right|^2 .
$$
We say that it is decomposably positive if, moreover, $\left\langle R_x^E(v, \bar{v}) \xi, \xi\right\rangle_{h^E} \neq 0$ for $v, \xi \neq 0$.
\end{definition}
 \begin{remark}
From the above definition, a (dual) Nakano positive vector bundle must be decomposably positive. From \cite[Proposition 2.21]{Fin}, for $n\cdot r\leq 6$ decomposable positivity is equivalent to Griffiths positivity, i.e. 
\begin{equation}\label{Griffiths-positive}
  R_{i\b{j}\alpha\b{\beta}}v^i\b{v}^j\xi^\alpha\b{\xi}^\beta>0
\end{equation}
 for any non-zero $\xi=\xi^\alpha \p_\alpha\in T^{1,0}X$ and $v=v^ie_i\in E$. Decomposable positivity is strictly stronger than Griffiths positivity for all other $n, r \neq 1$. 
 \end{remark}
Next, we will introduce the notion of strongly decomposable positivity, which falls in between (dual) Nakano positivity and decomposable positivity.
\begin{definition}[Strongly decomposably positive of type I]\label{strongly} 
A Hermitian vector bundle $(E,h^E)$ is called a strongly decomposably positive (resp. non-negative) vector bundle of type I if, for any $x\in X$, there exists a decomposition $T^{1,0}_x X=U_x\oplus V_x$ such that 
\begin{equation*}
 R_{i\b{j}\alpha\b{\beta}}u^{i\alpha}\o{u^{j\beta}}>0(\text{resp. }\geq 0),  \,\, R_{i\b{j}\alpha\b{\beta}}u^{i\alpha}\o{v'^{j\beta}}=0,\,\, R_{i\b{j}\alpha\b{\beta}}v^{\b{j}\alpha}\o{v^{\b{i}\beta}}>0(\text{resp. }\geq 0)
\end{equation*}
 for any non-zero elements $u=u^{i\alpha}e_i\otimes \p_\alpha\in E_x\otimes U_x$, $v'=v'^{j\beta}e_j\otimes \p_\beta\in E_x\otimes V_x$ and $v=v^{\b{j}\alpha}\b{e}_j\otimes \p_\alpha\in \o{E}_x\otimes V_x$.
 \end{definition}
 \begin{remark}
 	In fact, by taking $u^{i\alpha}=1=v'^{j\beta}$ in the above definition, the condition $R_{i\b{j}\alpha\b{\beta}}u^{i\alpha}\o{v'^{j\beta}}=0$ is equivalent to $R_{i\b{j}\alpha\b{\beta}}=0$ for any $\p_\alpha\in U_x$ and $\p_\beta\in V_x$, i.e. $R(U_x,\o{V_x})=0$, which is also equivalent to $R_{i\b{j}\alpha\b{\beta}}v^{\b{j}\alpha}\o{u'^{\b{i}\beta}}=0$ for any $v=v^{\b{j}\alpha}\b{e}_j\otimes \p_\alpha\in \o{E}_x\otimes V_x$ and $u'=u'^{\b{i}\beta}\b{e}_i\otimes\p_\beta\in \o{E}_x\otimes U_x$. Hence, the above definition is invariant under switching $U$ and $V$. 
 \end{remark}

For any point $x\in X$, if $V_x=\{0\}$ (resp. $U_x=\{0\}$), then strongly decomposable positivity of type I is exactly Nakano positive (resp. dual Nakano positive). 
\begin{example}
Let $\pi_1:(E_1,h^{E_1})\to X_1$ be a Nakano positive vector bundle and 	$\pi_2:(E_2,h^{E_2})\to X_2$ be a dual Nakano positive vector bundle. Denote by $p_i:X_1\times X_2\to X_i$, $i=1,2$, the natural projections, then 
\begin{equation*}
  \pi_{\oplus}:(p_1^*E_1\oplus p_2^*E_2,p_1^*h^{E_1}\oplus p_2^*h^{E_2})\to X_1\times X_2
\end{equation*}
is a strongly decomposably non-negative vector bundle of type I, and 
\begin{equation*}
  \pi_{\otimes}:(p_1^*E_1\otimes p_2^*E_2,p_1^*h^{E_1}\otimes p_2^*h^{E_2})\to X_1\times X_2
\end{equation*}
is a strongly decomposably positive vector bundle of type I. 

\end{example}

From Definition \ref{strongly}, a strongly decomposably positive vector bundle  $(E,h^E)$ of type I means that there is a decomposition of holomorphic tangent bundle $T^{1,0}_x X=U_x\oplus V_x$, such that $(E,h^E)$ is Nakano positive in $E_x\otimes U_x$, dual Nakano positive in $\o{E}_x\otimes V_x$, and the cross curvature terms vanish. Naturally, one may define another strongly decomposable positivity of vector bundles by decomposing the vector bundle. More precisely, 
\begin{definition}[Strongly decomposably positive of type II]\label{stongly-II}
We call $(E,h^E)$ a strongly decomposably positive (resp. non-negative) vector bundle of type II if, for any $x\in X$, there is an orthogonal decomposition of $(E_x,h^{E}|_{E_x})$, $E_x=E_{1,x}\oplus E_{2,x}$, such that the Chern curvature $R^E_x$ has the following form
 \begin{equation*}
  R^E_x=\begin{pmatrix}
  R^{E}_x|_{E_{1,x}}&0 \\
  0& R^{E}_x|_{E_{2,x}}
\end{pmatrix},
\end{equation*}
and $R_{i\b{j}\alpha\b{\beta}}u^{i\alpha}\o{u^{j\beta}}>0$ (resp. $\geq 0$) for any non-zero $u=u^{i\alpha} e_i\otimes \p_\alpha\in E_{1,x}\otimes T^{1,0}_xX$, $R_{i\b{j}\alpha\b{\beta}}v^{i\b{\beta}}\o{v^{j\b{\alpha}}}>0$ (resp. $\geq 0$) for any non-zero $v=v^{i\b{\beta}}e_i\otimes \p_{\b{\beta}}\in E_{2,x}\otimes T^{0,1}_xX$.
\end{definition}
A simple example of a strongly decomposably vector bundle of type II is as follows. 
\begin{example}
	Let $(E,h^E)$ be a Nakano positive vector bundle and $(F,h^F)$ be a dual Nakano positive vector bundle over a complex manifold $X$. Then $(E\oplus F, h^E\oplus h^F)$ is a strongly decomposably positive vector bundle of type II. 
\end{example}
By Definition \ref{strongly} and Definition \ref{stongly-II}, a strongly decomposably positive vector bundle is defined as follows.
\begin{definition}[Strongly decomposably positive]\label{Defn-strongly}
A Hermitian vector bundle $(E,h^E)$ is called strongly decomposably positive if it is a strongly decomposably positive vector bundle of type I or type II.
\end{definition}

Similarly, one can define strongly decomposably negative (non-positive) vector bundles. Note that the dual of the Nakano positive (negative) vector bundle is dual Nakano negative (positive), so 
\begin{proposition}
	A Hermitian vector bundle $(E,h^E)$ is a strongly decomposably positive (non-negative) vector bundle of type I (type II) if and only if $(E^*,h^{E^*})$ is a strongly decomposably negative (non-positive) vector bundle of type I (type II). 
\end{proposition}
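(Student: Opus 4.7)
The plan is to reduce this entirely to the well-known duality formula for Chern curvatures: at any point $x\in X$, in a local unitary frame of $(E,h^E)$ at $x$ with dual frame $\{e^i\}$ (automatically unitary for $h^{E^*}$ at $x$), one has
\[
R^{E^*}_{i\b{j}\alpha\b{\beta}}(x) = -R^E_{j\b{i}\alpha\b{\beta}}(x).
\]
The salient feature of this formula is the transposition $i \leftrightarrow j$ on the fiber indices together with a global sign. This transposition is precisely what converts a Nakano-type expression into a dual-Nakano-type expression (and vice versa), while the sign switches positivity to negativity. Since the positivity conditions are pointwise and tensorial, I would carry out the verification at a single point in a unitary frame and then observe that the conclusion is independent of this choice.

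For the type I case, suppose $(E,h^E)$ is strongly decomposably positive of type I with witnessing splitting $T^{1,0}_x X = U_x \oplus V_x$. For $(E^*,h^{E^*})$ I propose the \emph{swapped} decomposition $U'_x := V_x$ and $V'_x := U_x$. I would then check the three clauses of Definition \ref{strongly} (with the sign reversed) one by one: (i) Nakano negativity of $E^*$ on $E^*_x \otimes U'_x$ follows from dual Nakano positivity of $E$ on $\o{E}_x \otimes V_x$ by reading the identity above on the subspace $V_x$ and translating components $u^{i\alpha}$ for $E^*$ into $v^{\b{i}\alpha}$ for $\o{E}$ (the antilinear isomorphism $E^* \cong \o{E}$ induced by $h^E$ coincides with the numerical identification of components in a unitary frame); (ii) dual Nakano negativity of $E^*$ on $\o{E^*}_x \otimes V'_x$ follows symmetrically from Nakano positivity of $E$ on $E_x \otimes U_x$; (iii) the cross-term vanishing for $E^*$ with $\alpha \in U'_x = V_x$, $\b\beta \in \o{V'_x} = \o{U_x}$ reduces, via the duality formula, to the vanishing of $R^E_{j\b{i}\alpha\b{\beta}}$ with $\alpha \in V_x$, $\b\beta \in \o{U_x}$, which is itself the original cross-term condition of $E$ read with $\alpha$ and $\beta$ swapped, and the Hermitian symmetry $R^E_{i\b{j}\alpha\b{\beta}} = \o{R^E_{j\b{i}\beta\b\alpha}}$ makes the vanishing insensitive to this swap.

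For the type II case the argument is even more transparent. If $E_x = E_{1,x} \oplus E_{2,x}$ is an orthogonal splitting making the curvature block-diagonal, then $E^*_x$ inherits the orthogonal splitting $E^*_x = E^*_{2,x} \oplus E^*_{1,x}$ (note the \emph{reversal} of order) and $R^{E^*}_x$ remains block-diagonal because $R^{E^*}$ is, up to the transposition and sign, determined summand-wise from $R^E$. Applying the same identification as in type I on each block, Nakano positivity of $E_{1,x}$ gives dual Nakano negativity of $E^*_{1,x}$, which now sits in the \emph{second} slot, and dual Nakano positivity of $E_{2,x}$ gives Nakano negativity of $E^*_{2,x}$, now in the \emph{first} slot, matching Definition \ref{stongly-II} with the sign reversed. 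The non-negative/non-positive versions require no new idea, only replacing strict with non-strict inequalities throughout.

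The main (and essentially only) obstacle is careful bookkeeping of the index conventions in the duality formula $R^{E^*}_{i\b{j}\alpha\b{\beta}} = -R^E_{j\b{i}\alpha\b{\beta}}$ and the antilinear identification $E^*_x \cong \o{E}_x$ used to translate vectors between the Nakano and dual Nakano pairings; everything else is direct substitution. Finally, the ``if'' direction of the biconditional, from negativity of $E^*$ back to positivity of $E$, follows by applying the forward implication to $(E^*, h^{E^*})$ in place of $(E, h^E)$ and using the canonical isometric isomorphism $(E^{**}, h^{E^{**}}) \cong (E, h^E)$.
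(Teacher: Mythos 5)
Your proposal is correct and follows essentially the same route as the paper, which proves this proposition only by the one-line observation that the dual of a Nakano positive (negative) bundle is dual Nakano negative (positive); your write-up simply fills in the details via the pointwise identity $R^{E^*}_{i\b{j}\alpha\b{\beta}}=-R^{E}_{j\b{i}\alpha\b{\beta}}$ in a unitary frame. The one substantive point you add beyond the paper --- correctly --- is that the witnessing decomposition must be swapped ($U_x\leftrightarrow V_x$ for type I, $E_{1,x}\leftrightarrow E_{2,x}$ for type II) so that the dual bundle matches the (implicitly defined) negative versions of Definitions \ref{strongly} and \ref{stongly-II}.
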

\begin{remark}
Let $(E,h^E)$ be a strongly decomposably positive vector bundle, and $Q$ be a quotient bundle of $E$. The curvature of the bundle $Q$ is given by 
\begin{equation*}
  R^Q=\left.R^E\right|_Q+C \wedge \o{C}^\top
\end{equation*}
for some matrix $C$ whose entries are $(1,0)$-forms. From the criteria of strongly decomposably positive vector bundles, Theorem \ref{criterion} and Theorem \ref{prop-criterion}, and the above curvature formula of quotient bundles, the quotient bundle $(Q,h^Q)$ cease to be a strongly decomposably positive vector bundle in general. 
\end{remark}

\subsection{Relation to decomposable positivity}

From the equivalent descriptions of Nakano non-negative and dual non-negative due to S. Finski \cite[Theorem 2.15 and 2.17]{Fin}, one has 
\begin{proposition}\label{prop1}
A Hermitian vector bundle $(E,h^E)$ is decomposably non-negative if and only if the Chern curvature matrix has the following form 
\begin{equation}\label{decom-pos0}
  R=-B\wedge\o{B}^\top+A\wedge\o{A}^\top
\end{equation}
 	with respect to a unitary frame, where $A$ (resp. $B$) is a $r\times N$ matrix with $(1,0)$-forms (resp. $(0,1)$-forms) as entries.
\end{proposition}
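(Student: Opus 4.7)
The plan is to perform an explicit pointwise computation matching the two presentations of the curvature, guided by Finski's characterizations of Nakano and dual Nakano non-negativity (\cite[Thm.~2.15, 2.17]{Fin}), which correspond to the special cases $A=0$ and $B=0$ of the statement.

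Fix $x\in X$, a unitary frame $\{e_i\}$ of $E$ at $x$, and holomorphic coordinates $\{z^\alpha\}$, so that $h_{i\bar j}=\delta_{ij}$ and $R_{i\bar j}=R^j_i$. Writing $R^j_i=R^j_{i,\alpha\bar\beta}\,dz^\alpha\wedge d\bar z^\beta$, one has for $v=v^\alpha\p_\alpha$ and $\xi=\xi^i e_i$
\begin{equation*}
  \langle R_x^E(v,\bar v)\xi,\xi\rangle_{h^E}=R^j_{i,\alpha\bar\beta}\,v^\alpha\bar v^\beta\,\xi^i\bar\xi^j,
\end{equation*}
so the Hermitian form on the right recovers the full matrix of $(1,1)$-form coefficients $R^j_{i,\alpha\bar\beta}$.

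For the direction ($\Leftarrow$), assume $R=-B\wedge\bar B^\top+A\wedge\bar A^\top$ with $A=(A^i_p)$, $B=(B^i_q)$ of size $r\times N$ and entries $A^i_p=A^i_{p,\alpha}\,dz^\alpha$ and $B^i_q=B^i_{q,\bar\beta}\,d\bar z^\beta$. Expansion, using the sign flip $d\bar z^\beta\wedge dz^\alpha=-dz^\alpha\wedge d\bar z^\beta$ in the $B$-term, gives
\begin{equation*}
  R^j_{i,\alpha\bar\beta}=\sum_{p=1}^N A^j_{p,\alpha}\,\overline{A^i_{p,\beta}}+\sum_{q=1}^N B^j_{q,\bar\beta}\,\overline{B^i_{q,\bar\alpha}}.
\end{equation*}
Substituting into the previous display and regrouping factors, the two sums are identified respectively with $\sum_p|l_p(v,\xi)|^2$ for the sesquilinear form $l_p(v,\xi):=\tfrac{1}{\sqrt{2\pi}}A^i_{p,\alpha}v^\alpha\bar\xi^i$, and with $\sum_q|l'_q(v,\xi)|^2$ for the linear form $l'_q(v,\xi):=\tfrac{1}{\sqrt{2\pi}}\overline{B^i_{q,\bar\alpha}}v^\alpha\xi^i$. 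This is exactly Definition \ref{decom-pos}.

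For the converse ($\Rightarrow$), given a decomposition $\tfrac{1}{2\pi}\langle R^E(v,\bar v)\xi,\xi\rangle=\sum_p|l_p|^2+\sum_q|l'_q|^2$, write each $l_p(v,\xi)=c^i_{p,\alpha}v^\alpha\bar\xi^i$ and each $l'_q(v,\xi)=d^i_{q,\alpha}v^\alpha\xi^i$ in coordinates (padding the shorter sum with zeros so the two sums have a common length $N$), and then set $A^i_p:=\sqrt{2\pi}\,c^i_{p,\alpha}\,dz^\alpha$ and $B^i_q:=\sqrt{2\pi}\,\overline{d^i_{q,\alpha}}\,d\bar z^\alpha$. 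Reversing the calculation just performed yields the asserted matrix identity. The main technical obstacle is purely bookkeeping: keeping index positions, conjugation patterns, and the sign from reordering $d\bar z\wedge dz=-dz\wedge d\bar z$ aligned, while absorbing the factor $1/(2\pi)$ uniformly into $A$ and $B$; once these details are arranged correctly, the equivalence is a direct rewriting of one and the same Hermitian form in two guises.
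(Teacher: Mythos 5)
Your proof is correct and follows essentially the same route as the paper: expand $R=-B\wedge\o{B}^\top+A\wedge\o{A}^\top$ in coordinates with respect to a unitary frame, match the coefficients $R^j_{i\alpha\b\beta}=\sum_p A_{jp\alpha}\o{A_{ip\beta}}+\sum_p B_{jp\b\beta}\o{B_{ip\b\alpha}}$ against the sum-of-squares decomposition in Definition \ref{decom-pos}, identifying the $A$-terms with the sesquilinear forms and the $B$-terms with the linear forms. Your explicit handling of the $1/(2\pi)$ normalization and the zero-padding to a common $N$ are minor bookkeeping points that the paper glosses over, but the argument is the same.
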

\begin{proof}
 	 From Definition \ref{decom-pos}, $(E,h^E)$ is decomposably positive if for any $x \in X$, there is a number $N \in \mathbb{N}$ and linear (respectively sesquilinear) forms $l_p^{\prime}: T_x^{1,0} X \otimes E_x \rightarrow \mathbb{C}$ (respectively $l_p: T_x^{1,0} X \otimes E_x \rightarrow \mathbb{C}$), $p=1, \ldots, N$, such that for any $v \in T_x^{1,0} X, \xi \in E_x$, we have
 	 \begin{equation}\label{decom-pos1}
  \left\langle R_x^E(v, \bar{v}) \xi, \xi\right\rangle_{h^E}=\sum_{p=1}^N\left|l_p(v, \xi)\right|^2+\sum_{p=1}^N\left|l_p^{\prime}(v, \xi)\right|^2.
\end{equation}
We denote 
\begin{equation*}
  l_p(v,\xi)=l_{ip\b{\beta}}v^i\b{\xi}^\beta,\quad l'_p(v,\xi)=l'_{ip\alpha}v^i\xi^\alpha,
\end{equation*}
and set $A=(A_{jp})$ and $B=(B_{jp})$ by 
\begin{equation*}
  A_{jp}:=A_{jp\alpha}dz^\alpha=\o{l_{jp\b{\alpha}}}dz^\alpha,\quad B_{jp}:=B_{jp\b{\beta}}d\b{z}^\beta=\o{l'_{jp\beta}}d\b{z}^\beta.
\end{equation*}
Then \eqref{decom-pos1} is equivalent to
\begin{align}\label{decom-pos2}
\begin{split}
  R_{i\b{j}\alpha\b{\beta}}&=\sum_{p=1}^N l_{ip\b{\beta}}\o{l_{jp\b{\alpha}}}+\sum_{p=1}^N l'_{ip\alpha}\o{l'_{jp\beta}}\\
  &=\sum_{p=1}^N A_{jp\alpha}\o{A_{ip\beta}}+\sum_{i=1}^N B_{jp\b{\beta}}\o{B_{ip\b{\alpha}}}.
 \end{split}
\end{align}
With respect to a unitary frame, $R^j_{i\alpha\b{\beta}}=R_{i\b{j}\alpha\b{\beta}}$ and \eqref{decom-pos2} is equivalent to 
\begin{equation*}
  R=-B\wedge\o{B}^\top+A\wedge\o{A}^\top,
\end{equation*}
which completes the proof. 
\end{proof}
From Proposition \ref{prop1} and Definition \ref{decom-pos}, $(E,h^E)$ is decomposably positive if and only if \eqref{decom-pos0} holds and $\left\langle R_x^E(v, \bar{v}) \xi, \xi\right\rangle_{h^E} \neq 0$ for $v, \xi \neq 0$. By Theorem \ref{criterion} and Theorem \ref{prop-criterion}, we have
\begin{corollary}
	If $(E,h^E)$ is a strongly decomposably positive vector bundle of type I or type II, then $(E,h^E)$ is decomposably positive. 
\end{corollary}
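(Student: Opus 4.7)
The plan is to reduce the corollary to two separate verifications: (i) the Chern curvature matrix admits the factorization \eqref{decom-pos0} of Proposition \ref{prop1}, so that $(E,h^E)$ is decomposably non-negative; and (ii) the non-degeneracy $\langle R^E_x(v,\bar v)\xi,\xi\rangle_{h^E}\neq 0$ holds for any nonzero $v\in T^{1,0}_xX$ and $\xi\in E_x$. For step (i), I would simply invoke Theorem \ref{criterion} in the type I case and Theorem \ref{prop-criterion} in the type II case; these give, with respect to an appropriate unitary frame, the required representation $R=-B\wedge \o{B}^\top+A\wedge \o{A}^\top$ where the entries of $A$ (resp.\ $B$) are $(1,0)$-forms (resp.\ $(0,1)$-forms). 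Combined with Proposition \ref{prop1}, this establishes decomposable non-negativity.

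Step (ii) is really the point, and it is cleanest to argue directly from Definitions \ref{strongly} and \ref{stongly-II} rather than from the factorization. For type I, fix $x\in X$ and the decomposition $T^{1,0}_xX=U_x\oplus V_x$, and split $v=v_1+v_2$ with $v_1\in U_x$, $v_2\in V_x$. Expanding $\langle R^E_x(v,\bar v)\xi,\xi\rangle_{h^E}=R_{i\b j\alpha\b\beta}\xi^i\b\xi^j v^\alpha \b v^\beta$ in these components, the mixed terms are of the form $R_{i\b j\alpha\b\beta}u^{i\alpha}\o{v'^{j\beta}}$ with $u=\xi\otimes v_1\in E_x\otimes U_x$ and $v'=\xi\otimes v_2\in E_x\otimes V_x$, and their conjugates, so they vanish by the cross-vanishing hypothesis in Definition \ref{strongly}. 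What remains is
\begin{equation*}
\langle R^E_x(v,\bar v)\xi,\xi\rangle_{h^E}
=R_{i\b j\alpha\b\beta}(\xi^i v_1^\alpha)\o{(\xi^j v_1^\beta)}+R_{i\b j\alpha\b\beta}(\b\xi^j v_2^\alpha)\o{(\b\xi^i v_2^\beta)},
\end{equation*}
the first summand being controlled by Nakano positivity on $E_x\otimes U_x$ and the second by dual Nakano positivity on $\o E_x\otimes V_x$. Each term is non-negative, and since $v\neq 0$ forces at least one of $v_1,v_2$ to be nonzero, the corresponding summand is strictly positive when $\xi\neq 0$; hence the whole expression is nonzero.

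For type II, I would use the orthogonal decomposition $E_x=E_{1,x}\oplus E_{2,x}$ and the block-diagonal form of $R^E_x$ from Definition \ref{stongly-II}. Writing $\xi=\xi_1+\xi_2$, the off-diagonal blocks annihilate the cross pairings, leaving
\begin{equation*}
\langle R^E_x(v,\bar v)\xi,\xi\rangle_{h^E}=\langle R^E_x(v,\bar v)\xi_1,\xi_1\rangle_{h^E}+\langle R^E_x(v,\bar v)\xi_2,\xi_2\rangle_{h^E},
\end{equation*}
where the first summand is strictly positive by Nakano positivity on $E_{1,x}$ when $\xi_1\neq 0$ and $v\neq 0$, and the second is strictly positive by dual Nakano positivity on $E_{2,x}$ when $\xi_2\neq 0$ and $v\neq 0$. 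As $\xi\neq 0$ forces some $\xi_k\neq 0$, the sum is strictly positive.

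The only real obstacle is bookkeeping: one must keep track of which subspace each tensor factor lives in so as to apply the correct strict inequality (Nakano or dual Nakano) and to confirm that every mixed term really is killed by the structural hypothesis. Once this is done, combining (i) and (ii) with the equivalent characterization recalled in the remark preceding the corollary yields decomposable positivity.
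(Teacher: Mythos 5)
Your proposal is correct and follows essentially the same route as the paper: decomposable non-negativity comes from the factorizations in Theorem \ref{criterion} and Theorem \ref{prop-criterion} via Proposition \ref{prop1}, and the non-degeneracy $\langle R^E_x(v,\bar v)\xi,\xi\rangle_{h^E}\neq 0$ is then checked for $v,\xi\neq 0$. The paper leaves that second step implicit (it can also be read off from the rank conditions on $\mathbf{A}$ and $\mathbf{B}$ in the criteria), whereas you verify it directly from Definitions \ref{strongly} and \ref{stongly-II}; your splitting of $v$ (type I) and of $\xi$ (type II) and the identification of the surviving terms with the Nakano and dual Nakano quadratic forms are all accurate.
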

On the other hand, from Theorem \ref{criterion} and Theorem \ref{prop-criterion}, the two types of strongly decomposably positive vector bundles can not contain each other. Both are the generalizations of (dual) Nakano positive vector bundles and are stronger than decomposable positivity. One can refer to the following Figure 1. 
\begin{figure}[ht]
\centering
\includegraphics[width=0.7\textwidth]{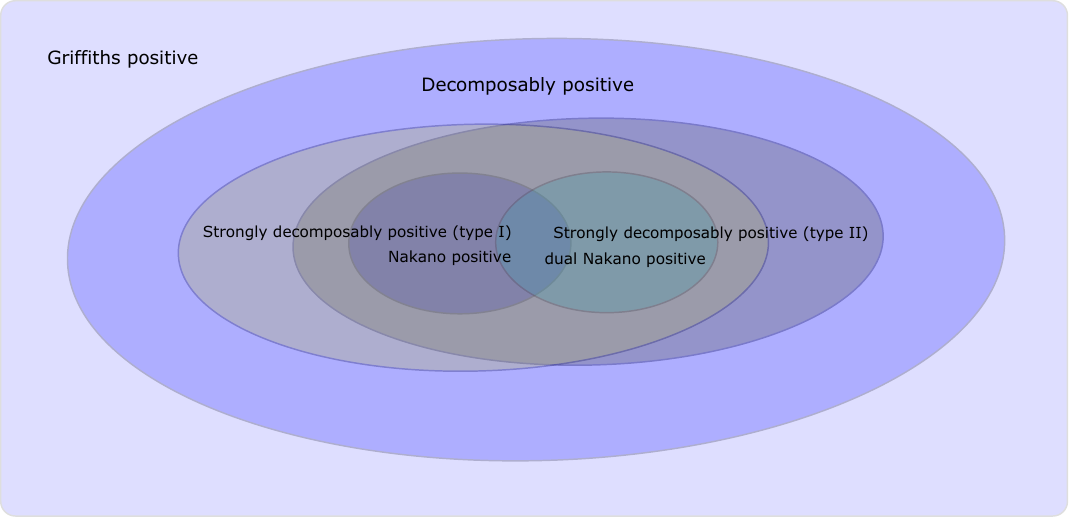}
\caption{Relations of several notions of positivity}
\end{figure}
\begin{remark}
Note that the set of curvature operators of a vector bundle, whether they are Griffiths positive, decomposably positive, or (dual) Nakano positive, is closed under addition. Specifically, if both \(R_1\) and \(R_2\) are curvature operators that fall into any of these categories, then their sum \(R_1 + R_2\) will also belong to the same category. Moreover, a decomposably positive curvature operator can be expressed as the sum of a Nakano positive curvature operator and a dual Nakano positive curvature operator. As a result, it is also the sum of the strongly decomposable positivity of type I (or type II). Since the strongly decomposable positivity of type I (type II) is strictly stronger than decomposable positivity, the set of strongly decomposable positive curvature operators of type I (or type II) is not closed under addition. 
\end{remark}

\section{Positivity notions for differential forms}\label{sec:positivity}

In this section, we will recall positivity notions for differential forms. For more details, one can refer to \cite[Section 1.1]{Fag20} and \cite{RA, Fin}. 

Let $V$ be a complex vector space of dimension $n$ and let $(e_1,\cdots,e_n)$ be a basis of $V$. Denote by $(e^1,\cdots,e^n)$ the dual basis of $V^*$. Let $\Lambda^{p,q}V^*$ denote the space of $(p,q)$-forms, and $\Lambda^{p,p}_{\mb{R}}V^*\subset \Lambda^{p,p}V^*$ be the subspace of real $(p,p)$-forms. 
\begin{definition}\label{various positivity}
A form $\nu\in \Lambda^{n,n}V^*$ is called a non-negative (resp. positive) volume form if $\nu=\tau\sqrt{-1}e^1\wedge \o{e}^1\wedge \cdots\wedge \sqrt{-1}e^n\wedge \o{e}^n$ for some $\tau\in \mb{R}$, $\tau \geq 0$ (resp. $\tau>0$).
\end{definition}
Now we set $q=n-p$, a $(q,0)$-form $\beta$ is called decomposable if $\beta=\beta_1\wedge \cdots\wedge \beta_q$ for some $\beta_1,\ldots,\beta_q\in V^*$. 
\begin{definition}
	A real $(p,p)$-form $u\in \Lambda^{p,p}_{\mb{R}}V^*$ is called 
	\begin{itemize}
  \item weakly non-negative (resp. weakly positive), if for every non-zero $\beta\in \Lambda^{q,0}V^*$ decomposable, $u\wedge (\sqrt{-1})^{q^2}\beta\wedge\b{\beta}$ is a non-negative (resp. positive) volume form;
  \item non-negative (resp. positive), if for every non-zero $\beta\in \Lambda^{q,0}V^*$, $u\wedge (\sqrt{-1})^{q^2}\beta\wedge\b{\beta}$ is a non-negative (resp. positive) volume form;
  \item strongly non-negative (resp. strongly positive) if there are decomposable forms $\alpha_1,\ldots,\alpha_N\in \Lambda^{p,0}V^*$ such that $u=\sum_{s=1}^N (\sqrt{-1})^{p^2}\alpha_s\wedge \o{\alpha_s}$.
\end{itemize}
\end{definition}
\begin{remark}
Let $\mathrm{WP}^p V^{*}, \mathrm{P}^p V^{*}$ and $\mathrm{SP}^p V^{*}$ denote respectively the closed positive convex cones contained in $\Lambda_{\mathbb{R}}^{p, p} V^{\vee}$ spanned by weakly non-negative, non-negative and strongly non-negative forms. Then
  \begin{equation}
  \mathrm{SP}^p V^{*} \subseteq \mathrm{P}^p V^{*} \subseteq \mathrm{WP}^p V^{*}.
\end{equation}
Note that the above two inclusions become equalities for $p=0,1,n-1,n$, and the inclusions are strict for $2\leq p\leq n-2$, see e.g. \cite[Remark 1.7, 1.8]{Fag20} and \cite{RA}. 
\end{remark}

\begin{proposition}\label{product}
	If $u$ is a positive $(k,k)$-form and $v$ is a positive $(l,l)$-form,  $k+l\leq n$, then $u\wedge v$ is a positive $(k+l,k+l)$-form. 
\end{proposition}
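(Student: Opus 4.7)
The strategy is to exploit the sum-of-squares characterization of non-negative forms recalled just before \eqref{non-negative}: a real $(p,p)$-form $w$ is non-negative if and only if $w=\sum_s (\sqrt{-1})^{p^2}\alpha_s\wedge\o{\alpha_s}$ for some $\alpha_s\in\Lambda^{p,0}V^*$. Positivity is then the requirement that $w\wedge (\sqrt{-1})^{q^2}\beta\wedge\o{\beta}>0$ for every nonzero $\beta\in\Lambda^{q,0}V^*$. With this in hand, the proof reduces to one algebraic identity and a short linear-algebra argument.

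First I would use the characterization to write $u=\sum_s (\sqrt{-1})^{k^2}\alpha_s\wedge\o{\alpha_s}$ and $v=\sum_t (\sqrt{-1})^{l^2}\gamma_t\wedge\o{\gamma_t}$, and then expand $u\wedge v$. The sign identity $(\sqrt{-1})^{(k+l)^2}=(-1)^{kl}(\sqrt{-1})^{k^2+l^2}$ pairs with the $(-1)^{kl}$ coming from moving $\o{\alpha_s}$ past $\gamma_t$, yielding
\[
u\wedge v=\sum_{s,t}(\sqrt{-1})^{(k+l)^2}\delta_{s,t}\wedge\o{\delta_{s,t}},\qquad \delta_{s,t}:=\alpha_s\wedge\gamma_t\in\Lambda^{k+l,0}V^*.
\]
This already establishes non-negativity of $u\wedge v$ via the same characterization. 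To upgrade to strict positivity, fix a nonzero $\eta\in\Lambda^{n-k-l,0}V^*$. Repeating the same sign bookkeeping once more,
\[
u\wedge v\wedge (\sqrt{-1})^{(n-k-l)^2}\eta\wedge\o{\eta}=\Bigl(\sum_{s,t}|c_{s,t}|^2\Bigr)\nu_0,
\]
where $c_{s,t}\in\mathbb{C}$ is defined by $\alpha_s\wedge\gamma_t\wedge\eta=c_{s,t}\,e^1\wedge\cdots\wedge e^n$ and $\nu_0$ is a fixed positive volume form. So strict positivity of $u\wedge v$ at $\eta$ is equivalent to: at least one $\alpha_s\wedge\gamma_t\wedge\eta\neq 0$.

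I would prove this last assertion by contradiction. Assume $\alpha_s\wedge\gamma_t\wedge\eta=0$ for all $s,t$, and set $\tilde\eta_s:=\alpha_s\wedge\eta\in\Lambda^{n-l,0}V^*$, so that $\gamma_t\wedge\tilde\eta_s=0$ for every $s,t$. Applying the same computation to $v$ tested against each $\tilde\eta_s$ gives $v\wedge(\sqrt{-1})^{(n-l)^2}\tilde\eta_s\wedge\o{\tilde\eta_s}=\sum_t|d_{s,t}|^2\,\nu_0$ with $d_{s,t}$ now the coefficient of $\gamma_t\wedge\tilde\eta_s$; since all $d_{s,t}=0$, the hypothesis that $v$ is positive forces $\tilde\eta_s=0$, i.e., $\alpha_s\wedge\eta=0$ for every $s$. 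Using $k+l\leq n$ and the elementary fact that the wedge map $\Lambda^{l,0}V^*\to\Lambda^{n-k,0}V^*$, $\tau\mapsto\eta\wedge\tau$, is nonzero (extend any nonzero monomial of $\eta$ by a disjoint monomial $\tau$), one chooses $\tau$ with $\beta:=\eta\wedge\tau\neq 0$. Then $\alpha_s\wedge\beta=\pm\alpha_s\wedge\eta\wedge\tau=0$ for all $s$, contradicting positivity of $u$ against the nonzero $(n-k,0)$-form $\beta$.

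The only minor obstacle is keeping track of the three repeated sign calculations, each of which is an instance of the identity $(\sqrt{-1})^{(a+b)^2}=(-1)^{ab}(\sqrt{-1})^{a^2+b^2}$ together with a $(-1)^{ab}$ from commuting a $(0,b)$-factor past a $(a,0)$-factor. The hypothesis $k+l\leq n$ enters exactly once — to guarantee the existence of a $(l,0)$-form $\tau$ with $\eta\wedge\tau\neq 0$ — and this is precisely where the dimension constraint in the proposition is used.
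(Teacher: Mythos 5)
Your proof is correct and follows essentially the same route as the paper's: both rest on the sum-of-squares characterization of non-negative $(p,p)$-forms from \cite[Theorem 1.2]{RA} and obtain strict positivity by showing that vanishing of the test pairing against a nonzero $\eta$ would force one of the two factors to violate its own positivity. The differences are cosmetic --- the paper decomposes only $v$, cites \cite[Corollary 1.3 (a)]{RA} for non-negativity of the product, and contradicts the positivity of $v$ after using that of $u$, whereas you decompose both factors (getting non-negativity for free) and run the contradiction in the mirror order, along the way making explicit the extension step $\eta\mapsto\eta\wedge\tau$ that the paper leaves implicit.
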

\begin{proof}
By \cite[Corollary 1.3 (a)]{RA}, $u\wedge v$ is a non-negative $(k+l,k+l)$-form, i.e. for any non-zero $\beta\in \Lambda^{n-k-l,0}V^*$, 
\begin{equation}\label{prod-positive}
  u\wedge v\wedge (\sqrt{-1})^{(n-k-l)^2}\beta\wedge\b{\beta}\geq 0.
\end{equation}
By \cite[Theorem 1.2]{RA}, $v$ has the following form
\begin{equation*}
  v=\sum_{s=1}^N (\sqrt{-1})^{l^2}\alpha_s\wedge \o{\alpha_s}
\end{equation*}
for some $(l,0)$-forms $\alpha_s$, $1\leq s\leq N$. So
\begin{align*}
\begin{split}
  u\wedge v\wedge (\sqrt{-1})^{(n-k-l)^2}\beta\wedge\b{\beta}&=\sum_{s=1}^N u\wedge (\sqrt{-1})^{l^2}\alpha_s\wedge \o{\alpha_s}\wedge (\sqrt{-1})^{(n-k-l)^2}\beta\wedge\b{\beta}\\
  &=\sum_{s=1}^N u\wedge (\sqrt{-1})^{(n-k)^2}\alpha_s\wedge\beta\wedge \o{\alpha_s\wedge\beta}.
 \end{split}
\end{align*}
Thus the equality in \eqref{prod-positive} holds if and only if 
\begin{equation*}
  u\wedge (\sqrt{-1})^{(n-k)^2}\alpha_s\wedge\beta\wedge \o{\alpha_s\wedge\beta}=0,\quad 1\leq s\leq N,
\end{equation*}
which is equivalent to 
\begin{equation*}
  \alpha_s\wedge \beta=0, \quad 1\leq s\leq N.
\end{equation*}
Thus
\begin{align*}
\begin{split}
  v\wedge (\sqrt{-1})^{(n-k-l)^2}\beta\wedge\b{\beta}=\sum_{s=1}^N  (\sqrt{-1})^{(n-k)^2}\alpha_s\wedge\beta\wedge \o{\alpha_s\wedge\beta}=0,
 \end{split}
\end{align*}
which contradicts the positivity of $v$. Hence 
\begin{equation*}
   u\wedge v\wedge (\sqrt{-1})^{(n-k-l)^2}\beta\wedge\b{\beta}>0
\end{equation*}
for any non-zero $\beta\in \Lambda^{n-k-l,0}V^*$, i.e. $u\wedge v$ is positive.

\end{proof}

Let $X$ be a complex manifold of dimension $n$ and denote by $A^{p,q}(X)$ the space of all smooth $(p,q)$-forms. 
\begin{definition}
	A real $(p,p)$-form $\alpha\in A^{p,p}(X)$ is called weakly non-negative (weakly positive), non-negative (positive), or strongly non-negative (strongly positive) if for any $x\in X$, $\alpha_x\in \Lambda^{p,p}_{\mb{R}}(T_x^{1,0}X)^*$ is weakly non-negative (weakly positive), non-negative (positive), or strongly non-negative (strongly positive) respectively. 
\end{definition}

\section{Strongly decomposable positivity of type I}\label{sec:pos}

In this section, we will give a criterion of strongly decomposably positive vector bundles of type I and prove the weak positivity of Schur forms. 

\subsection{A criterion of type I positivity}

In this subsection, following S. Finski's approach \cite[Theorem 2.15, 2.17]{Fin}, we will give a criterion for the strongly decomposable positivity (non-negativity) of type I by using M.-D. Choi's results. 

Let $(E,h^E)$ be a strongly decomposably non-negative vector bundle of type I. For any $x\in X$, there exists a decomposition $T^{1,0}_xX=U_x\oplus V_x$. One can take local holomorphic coordinates $\{z^1,\cdots, z^n\}$ around $x$ such that 
\begin{equation*}
  U_x=\mathrm{span}_{\mb{C}}\{\p_1,\cdots, \p_{n_0}\},\,\, V_x=\mathrm{span}_{\mb{C}}\{\p_{n_0+1},\cdots, \p_{n}\},
\end{equation*}
where $n_0:=\dim U_x$ and recall that $\p_\alpha:=\p/\p z^\alpha$. Let $\{e_i\}_{1\leq i\leq r}$ be a local holomorphic frame of $E$ such that 
\begin{equation*}
  h_{i\b{j}}(x)=h^E(e_i(x),e_j(x))=\delta_{ij}.
\end{equation*}
With respect to $\{z^\alpha\}_{1\leq \alpha\leq n}$ and $\{e_i\}_{1\leq i\leq r}$, the Chern curvature matrix $R=(R^j_i)$ at $x\in X$ has the following expression
\begin{align}\label{cur1}
\begin{split}
  R^j_i&=R^j_{i\alpha\b{\beta}}dz^\alpha\wedge d\b{z}^\beta\\
  &=\sum_{\alpha,\beta=1}^{n_0}R_{i\alpha\b{\beta}}^j dz^\alpha\wedge d\b{z}^\beta+\sum_{\alpha,\beta=n_0+1}^{n}R_{i\alpha\b{\beta}}^j dz^\alpha\wedge d\b{z}^\beta\\
  &\quad+\sum_{\alpha=1}^{n_0}\sum_{\beta=n_0+1}^{n}R_{i\alpha\b{\beta}}^j dz^\alpha\wedge d\b{z}^\beta+\sum_{\alpha=n_0+1}^{n}\sum_{\beta=1}^{n_0}R_{i\alpha\b{\beta}}^j dz^\alpha\wedge d\b{z}^\beta.
 \end{split}
\end{align}
By assumption, $(E,h^E)$ is strongly decomposably non-negative of type I, so 
\begin{align*}
\sum_{\alpha=1}^{n_0}\sum_{\beta=n_0+1}^nR_{i\b{j}\alpha\b{\beta}}u^{i\alpha}\o{v'^{j\beta}}=0	
\end{align*}
for any $\sum_{\alpha=1}^{n_0}u^{i\alpha}e_i\otimes \p_\alpha$ and $\sum_{\beta={n_0+1}}^nv'^{j\beta}e_j\otimes \p_\beta$, which follows that 
\begin{equation}\label{cur2}
  R_{i\b{j}\alpha\b{\beta}}=0,\quad 1\leq \alpha\leq n_0,\,n_0+1\leq \beta\leq n.
\end{equation}
By conjugation, one gets
\begin{equation}\label{cur3}
  R_{i\b{j}\alpha\b{\beta}}=\o{R_{j\b{i}\beta\b{\alpha}}}=0,\quad n_0+1\leq \alpha\leq n,\,1\leq \beta\leq n_0.
\end{equation}
Substituting \eqref{cur2}, \eqref{cur3} into \eqref{cur1}, one has
\begin{equation*}
  R^j_i=\sum_{\alpha,\beta=1}^{n_0}R_{i\alpha\b{\beta}}^j dz^\alpha\wedge d\b{z}^\beta+\sum_{\alpha,\beta=n_0+1}^{n}R_{i\alpha\b{\beta}}^j dz^\alpha\wedge d\b{z}^\beta.
\end{equation*}

For the local  frame $\{\p_\alpha\}_{1\leq \alpha\leq n}$, we define a local metric $g$ around $x$ by 
\begin{equation*}
 g_{\alpha\b{\beta}}= g(\p_\alpha,\p_\beta):=\delta_{\alpha\beta}.
\end{equation*}
Now we define a linear map 
\begin{equation*}
  H^V_x: \mr{End}(V_x)\to \mr{End}(E_x)
\end{equation*}
by 
\begin{equation}\label{H-op}
  H_x^V(\p_\alpha\otimes dz^\gamma)=R^j_{i  \alpha \bar{\beta}} g^{\bar{\beta} \gamma}  e_j \otimes e^i=R^j_{i  \alpha \bar{\gamma}}   e_j \otimes e^i
\end{equation}
for any $n_0+1\leq \alpha,\gamma\leq n$. With respect to the basis $\{\p_\alpha\}_{n_0+1\leq \alpha \leq n}$, the matrix of $\p_\alpha\otimes dz^\gamma\in \mr{End}(U_x)$ is $E_{\alpha\gamma}$, which is the $(n-n_0)\times (n-n_0)$ matrix with $1$ at the $(\alpha,\gamma)$-component and zeros elsewhere. The matrix of $R^j_{i  \alpha \bar{\gamma}}  e_j \otimes e^i\in \mr{End}(E_x)$ is given by 
$(R^j_{i  \alpha \bar{\gamma}})_{1\leq j,i\leq r}$ ($j$ row, $i$ column). In terms of matrices, \eqref{H-op} becomes
\begin{equation}\label{H-1}
  H^V_x(E_{\alpha\gamma})=(R^j_{i  \alpha \bar{\gamma}})_{1\leq j,i\leq r}=(R_{i \b{j} \alpha \bar{\gamma}})_{1\leq j,i\leq r}.
\end{equation}
Then $(H^V_x(E_{\alpha\gamma}))_{n_0+1\leq \alpha,\gamma\leq n}$ is a $(n-n_0)\times (n-n_0)$ block matrix with $r\times r$ matrices as entries, and 
\begin{align*}
\begin{split}
&(H^V_x(E_{\alpha\gamma}))_{n_0+1\leq \alpha,\gamma\leq n}=\left((R_{i \b{j} \alpha \bar{\gamma}})_{1\leq j,i\leq r}\right)_{n_0+1\leq \alpha,\gamma\leq n} \quad (j\, \alpha\text{ row}, i\,\gamma\text{ column}).
 \end{split}
\end{align*}
Since $(E,h^E)$ is strongly decomposably non-negative of type I, so 
\begin{equation*}
   R_{i\b{j}\alpha\b{\beta}}v^{\b{j}\alpha}\o{v^{\b{i}\beta}}\geq 0
\end{equation*}
 for any non-zero $v=v^{\b{j}\alpha}e_i\otimes \p_\alpha\in E_x\otimes V_x$,
which follows that the matrix 
$$(H^V_x(E_{\alpha\gamma}))_{n_0+1\leq \alpha,\gamma\leq n}$$ is positive semi-definite. By using \cite[Theorem 2 and Theorem 1]{Choi}, there exist $(n-n_0)\times r$ matrices $V_p, 1\leq p\leq N_1$ (one can choose $N_1=(n-n_0)\cdot r$) such that 
\begin{equation*}
  H^V_x(E_{\alpha\gamma})=\sum_{p=1}^{N_1} \o{V_p}^\top\cdot E_{\alpha\gamma}\cdot V_p
\end{equation*}
for any $n_0+1\leq\alpha,\gamma\leq n$.
Combining with \eqref{H-1} and considering the $(j,i)$ entry, one has
\begin{equation*}
  R^j_{i\alpha\b{\beta}}=\sum_{p=1}^{N_1} (\o{V_p}^\top\cdot E_{\alpha\beta}\cdot V_p)_{j,i}=\sum_{p=1}^{N_1}\o{(V_p)_{\alpha j}}(V_p)_{\beta i}.
\end{equation*}
Hence 
\begin{align*}
\begin{split}
  \sum_{\alpha,\beta=n_0+1}^{n}R^j_{i\alpha\b{\beta}}dz^\alpha\wedge d\b{z}^\beta &=\sum_{\alpha,\beta=n_0+1}^{n}\sum_{p=1}^{N_1}\o{(V_p)_{\alpha j}}(V_p)_{\beta i}dz^\alpha\wedge d\b{z}^\beta\\
  &=\sum_{p=1}^{N_1}A_{jp}\wedge \o{A_{i p}},
 \end{split}
\end{align*}
where $A_{jp}:=\sum_{\alpha=n_0+1}^n\o{(V_p)_{\alpha j}}dz^\alpha$, and one has
\begin{equation*}
  \left( \sum_{\alpha,\beta=n_0+1}^{n}R^j_{i\alpha\b{\beta}}dz^\alpha\wedge d\b{z}^\beta\right)_{1\leq j,i\leq r}=A\wedge \o{A}^\top,
\end{equation*}
where $A=(A_{jp})$ is a $r\times N_1$ matrix with $(1,0)$-forms in $V^*_x$ as entries. 

Similarly, by considering the linear map 
\begin{equation*}
  H^U_x:\mr{End}(U_x)\to \mr{End}(E_x^*), \quad  H_x^V(\p_\alpha\otimes dz^\gamma)=R^j_{i  \alpha \bar{\gamma}}  e^i\otimes  e_j. 
\end{equation*}
One can obtain that
\begin{align*}
\begin{split}
&(H^V_x(E_{\alpha\gamma}))_{1\leq \alpha,\gamma\leq n_0}=\left((R_{i \b{j} \alpha \bar{\gamma}})_{1\leq j,i\leq r}\right)_{1\leq \alpha,\gamma\leq n_0} \quad (i\, \alpha\text{ row}, j\,\gamma\text{ column}),
 \end{split}
\end{align*}
which follows that
\begin{equation*}
  \left( \sum_{\alpha,\beta=1}^{n_0}R^j_{i\alpha\b{\beta}}dz^\alpha\wedge d\b{z}^\beta\right)_{1\leq j,i\leq r}=-B\wedge \o{B}^\top,
\end{equation*}
where $B=(B_{jp})$ is a $r\times N_2$ (one can choose $N_2=n_0\cdot r$) matrix with $(0,1)$-forms in $\o{U^*_x}$ as entries. 

Thus, if $(E,h^E)$ is strongly decomposably non-negative of type I, then
for any $x\in X$, the Chern curvature matrix at this point 
has the following form
\begin{equation}\label{cur4}
  R=-B\wedge \o{B}^\top+A\wedge \o{A}^\top
\end{equation}
with respect to a unitary frame, where $B$ is a $r\times N_2$ matrix with  $(0,1)$-forms as entries, $A$ is a $r\times N_1$ matrix  with $(1,0)$-forms as entries, and
\begin{equation}\label{cur-con}
   \mr{span}_{\mb{C}}\{\o{B}\}\cap   \mr{span}_{\mb{C}}\{A\}=\{0\},
\end{equation}
 where $$\{\o{B}\}:=\{\o{B_{ip}}, {1\leq i\leq r,1\leq p\leq N_2}\}$$ and 
  $$\{A\}:=\{A_{ip},1\leq i\leq r, 1\leq p\leq N_1\}.$$
\begin{remark}\label{remark:unitary}
It is noted that the above argument is independent of the choice of unitary frames. If $\tilde{e}=e\cdot a$ is also a unitary frame, then $a$ is a unitary matrix. By \eqref{Tran}, one has
\begin{align*}
\begin{split}
  \wt{R}&=a^{-1}\cdot(-B\wedge \o{B}^\top+A\wedge \o{A}^\top)\cdot a\\
  &=-a^{-1}B\wedge \o{a^{-1}B}^\top+a^{-1}A\wedge \o{a^{-1}A}^\top,
 \end{split}
\end{align*}
which has the form \eqref{cur4}. Moreover, $\mr{span}_{\mb{C}}\{\o{B}\}=\mr{span}_{\mb{C}}\{\o{a^{-1}B}\}$ and $ \mr{span}_{\mb{C}}\{A\}= \mr{span}_{\mb{C}}\{a^{-1}A\}$, \eqref{cur-con} is equivalent to 
\begin{equation*}
  \mr{span}_{\mb{C}}\{\o{a^{-1}B}\}\cap \mr{span}_{\mb{C}}\{a^{-1}A\}=\{0\}.
\end{equation*}
\end{remark}
\vspace{5mm}
Conversely, we assume \eqref{cur4} and \eqref{cur-con} hold. For any $x\in X$, taking local holomorphic coordinates $\{z^\alpha\}_{1\leq\alpha\leq n}$ around $x\in X$ such that 
\begin{equation*}
  \mathrm{span}_{\mb{C}}\{dz^1|_x,\cdots, dz^{n_0}|_x\}=\mr{span}_{\mb{C}}\{\o{B}\}
\end{equation*}
and 
\begin{equation*}
  \mathrm{span}_{\mb{C}}\{dz^{n_0+1}|_x,\cdots, dz^{n_1}|_x\}=\mr{span}_{\mb{C}}\{A\}.
\end{equation*}
Now we set
\begin{equation*}
  U_x:= \mathrm{span}_{\mb{C}}\{\p_1|_x,\cdots, \p_{n_0}|_x\},\quad V_x=\mathrm{span}_{\mb{C}}\{\p_{n_0+1}|_x,\cdots, \p_{n}|_x\}.
\end{equation*}
Then $U_x\oplus V_x=T^{1,0}_xX$.  Using \eqref{cur4} and \eqref{cur-con}, one can check that $(E,h^E)$ is strongly decomposably non-negative.

 Hence $(E,h^E)$ is strongly decomposably non-negative of type I if and only if the Chern curvature matrix of $(E,h^E)$ satisfies  \eqref{cur4} and \eqref{cur-con}.

Next, we assume that $(E,h^E)$ is strongly decomposably positive of type I, i.e., it is strongly decomposably non-negative of type I and 
\begin{equation}\label{con1}
 R_{i\b{j}\alpha\b{\beta}}u^{i\alpha}\o{u^{j\beta}}=0\Longrightarrow u^{i\alpha}=0,\,\text{ for all }1\leq i\leq r,\,1\leq \alpha\leq n_0
\end{equation}
and
\begin{equation}\label{con2}
   R_{i\b{j}\alpha\b{\beta}}v^{\b{j}\alpha}\o{v^{\b{i}\beta}}=0\Longrightarrow v^{\b{j}\alpha}=0,\,\text{ for all }1\leq j\leq r,\,n_0+1\leq \beta\leq n.
\end{equation}
By the equivalent description of strongly decomposably non-negative of type I, i.e., \eqref{cur4} and \eqref{cur-con} hold, one has 
\begin{align*}
\begin{split}
R_{i\b{j}\alpha\b{\beta}}u^{i\alpha}\o{u^{j\beta}}=\sum_{p=1}^{N_2}|B_{ip\b{\alpha}}\o{u^{i\alpha}}|^2.
 \end{split}
\end{align*}
\begin{definition}
Let $B$ be a $r\times N_2$ matrix with  $(0,1)$-forms as entries. We define the following $N_2\times rn_0$-matrix $\mathbf{B}$ as
	\begin{equation}\label{defn B}
  \mathbf{B}:=(B_{ip\b{\alpha}})_{p,i\alpha}=\begin{pmatrix}
  B_{11\o{1}}& B_{11\o{2}}&\cdots& B_{r1\o{n_0}} \\
   B_{12\o{1}}& B_{11\o{2}}&\cdots& B_{r2\o{n_0}}\\
   \vdots&\vdots&\ddots&\vdots\\
    B_{1N_2\o{1}}& B_{1N_2\o{2}}&\cdots& B_{rN_2\o{n_0}}
\end{pmatrix}_{N_2\times rn_0}.
\end{equation}
Similarly, if $A$ is a $r\times N_1$ matrix  with $(1,0)$-forms as entries, we define 
\begin{equation}\label{defn A}
  \mathbf{A}:=(A_{jp\alpha})_{p,j\alpha}=\begin{pmatrix}
  A_{11(n_0+1)}& A_{11(n_0+2)}&\cdots& A_{r1n} \\
   A_{12(n_0+1)}& A_{11(n_0+2)}&\cdots& A_{r2n}\\
   \vdots&\vdots&\ddots&\vdots\\
    A_{1N_1(n_0+1)}& A_{1N_1(n_0+2)}&\cdots& A_{rN_1n}
\end{pmatrix}_{N_1\times r(n-n_0)}.
\end{equation}
\end{definition}

Hence \eqref{con1} is equivalent to the equation $\mathbf{B}x=0$ has only zero solution. This is also equivalent to $\mathrm{rank}(\mathbf{B})=r n_0$. Similarly, \eqref{con2} is equivalent to 
 $\mathrm{rank}(\mathbf{A})=r(n-n_0)$.

In a word, we obtain
\begin{theorem}\label{criterion}
\begin{itemize}
  \item A Hermitian vector bundle $(E,h^E)$ is strongly decomposably non-negative of type I  if and only if 
\eqref{cur4} and \eqref{cur-con} hold.
\item A Hermitian vector bundle $(E,h^E)$ is strongly decomposably positive of type I if and only if 
\eqref{cur4} and \eqref{cur-con} hold, and $\mr{rank}(\mathbf{A})=r\dim V_x$, $\mr{rank}(\mathbf{B})=r\dim U_x$.
\end{itemize}
\end{theorem}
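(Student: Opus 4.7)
The plan is to prove both biconditionals, treating the non-negativity characterization first and obtaining the positivity characterization as a refinement. For the forward direction of the non-negativity case, I would fix $x\in X$ together with a decomposition $T^{1,0}_xX = U_x\oplus V_x$ furnished by Definition \ref{strongly}, and choose adapted holomorphic coordinates $\{z^\alpha\}$ so that the first $n_0$ of the $\partial_\alpha$ span $U_x$ and the rest span $V_x$, along with a unitary frame $\{e_i\}$ of $E$ at $x$. The vanishing of the cross curvature term in Definition \ref{strongly} forces $R_{i\bar j\alpha\bar\beta}=0$ whenever $\alpha$ and $\beta$ belong to different blocks, so the curvature matrix splits into a $U$-block and a $V$-block.

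Next I would translate each remaining non-negativity condition into positive semi-definiteness of a block matrix and apply M.-D. Choi's theorem on completely positive maps between matrix algebras. Concretely, define $H^V_x:\mathrm{End}(V_x)\to\mathrm{End}(E_x)$ by $H^V_x(\partial_\alpha\otimes dz^\gamma)=R^j_{i\alpha\bar\gamma}\,e_j\otimes e^i$; dual Nakano non-negativity in $\overline{E}_x\otimes V_x$ is equivalent to positive semi-definiteness of the block matrix $(H^V_x(E_{\alpha\gamma}))_{n_0<\alpha,\gamma\le n}$. Choi's theorem then produces matrices $V_p$ with $H^V_x(E_{\alpha\gamma})=\sum_p\overline{V_p}^{\top}E_{\alpha\gamma}V_p$, which assembles into the $A\wedge\overline{A}^\top$ summand in \eqref{cur4}, with $A$ having $(1,0)$-entries drawn from $V_x^*$. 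A symmetric argument using $H^U_x:\mathrm{End}(U_x)\to\mathrm{End}(E^*_x)$ and Nakano non-negativity yields the $-B\wedge\overline{B}^\top$ summand with $B$ having $(0,1)$-entries from $\overline{U_x^*}$. The transversality \eqref{cur-con} is then automatic from $V_x^*\cap \overline{U_x^*}=\{0\}$ inside $(T_xX)^*_{\mathbb{C}}$.

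For the reverse direction, given \eqref{cur4} and \eqref{cur-con}, I would pick holomorphic coordinates at $x$ so that $\{dz^\alpha|_x\}_{1\le\alpha\le n_0}$ span $\mathrm{span}_\mathbb{C}\{\overline{B}\}$ and $\{dz^\alpha|_x\}_{n_0<\alpha\le n_1}$ span $\mathrm{span}_\mathbb{C}\{A\}$, and then declare $U_x$ and $V_x$ to be the corresponding coordinate subspaces. Direct substitution into the quadratic forms verifies the three conditions of Definition \ref{strongly}; independence from the choice of unitary frame follows from the transformation law \eqref{Tran} together with the preservation of spans, as recorded in Remark \ref{remark:unitary}. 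For the positivity criterion, I would expand $R_{i\bar j\alpha\bar\beta}u^{i\alpha}\overline{u^{j\beta}}=\sum_p\bigl|B_{ip\bar\alpha}\overline{u^{i\alpha}}\bigr|^2$, which vanishes precisely when $u$ lies in the kernel of the map $x\mapsto \mathbf{B}x$. Strict positivity in $E_x\otimes U_x$ is therefore equivalent to injectivity of $\mathbf{B}$, i.e.\ $\mathrm{rank}(\mathbf{B})=rn_0=r\dim U_x$, and the analogous computation on the $V$-side gives the rank condition on $\mathbf{A}$.

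The main obstacle is the invocation of Choi's theorem, which requires repackaging Nakano and dual Nakano non-negativity as complete positivity of a map between matrix algebras; this step follows Finski's framework from \cite[Theorem 2.15, 2.17]{Fin} in essentially identical fashion, the only new ingredient being that the decomposition $T^{1,0}_xX=U_x\oplus V_x$ decouples the two applications of Choi's theorem. A secondary bookkeeping subtlety is tracking which of $dz^\alpha$ versus $d\bar z^\beta$ appears in the entries of $A$ and $B$, which is dictated by the opposite covariance structures of Nakano versus dual Nakano positivity; once the correct assignment is made, the span condition \eqref{cur-con} emerges automatically from the choice of coordinates.
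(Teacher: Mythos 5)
Your proposal is correct and follows essentially the same route as the paper: block-splitting the curvature via the vanishing cross terms, applying Choi's theorem through the maps $H^V_x$ and $H^U_x$ exactly as in Finski's framework, reversing via adapted coordinates matching $\mr{span}_{\mb{C}}\{\o{B}\}$ and $\mr{span}_{\mb{C}}\{A\}$, and reading off the rank conditions from injectivity of $\mathbf{A}$ and $\mathbf{B}$. The only quibble is notational: \eqref{cur-con} concerns the $(1,0)$-forms $\o{B_{ip}}\in U_x^*$, so the relevant transversality is $U_x^*\cap V_x^*=\{0\}$ rather than $V_x^*\cap\o{U_x^*}=\{0\}$.
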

As a result, we obtain the following criteria of (dual) Nakano positive vector bundles. 
\begin{corollary}\label{corNakano}
	\begin{itemize}
  \item A Hermitian vector bundle $(E,h^E)$ is Nakano positive if and only if the Chern curvature matrix has the form 
  \begin{equation*}
  R=-B\wedge \o{B}^\top
\end{equation*}
with respect to some unitary frame, where $B$ is a $r\times N$ matrix with $(0,1)$-forms as entries, and $\mr{rank}(\mathbf{B})=rn$. 
\item A Hermitian vector bundle $(E,h^E)$ is dual Nakano positive if and only if the Chern curvature matrix has the form 
  \begin{equation*}
  R=A\wedge \o{A}^\top
\end{equation*}
with respect to some unitary frame, where $A$ is a $r\times N$ matrix with $(1,0)$-forms as entries, and $\mr{rank}(\mathbf{A})=rn$. 

\end{itemize}

\end{corollary}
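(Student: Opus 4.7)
The plan is to derive Corollary \ref{corNakano} as a direct specialization of Theorem \ref{criterion} to the degenerate decompositions. As noted just after Definition \ref{strongly}, Nakano positivity coincides with strongly decomposable positivity of type I in the extremal case $V_x = \{0\}$ (so $U_x = T^{1,0}_x X$ and $n_0 = n$), while dual Nakano positivity corresponds to $U_x = \{0\}$. These two extremes will yield the two bullets of the corollary.

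For the Nakano case, I would assume $(E,h^E)$ is Nakano positive and apply Theorem \ref{criterion} with the pointwise choice $V_x = \{0\}$. The curvature then takes the form $R = -B \wedge \o{B}^\top + A \wedge \o{A}^\top$ in a unitary frame, with the transversality condition \eqref{cur-con} and the rank conditions $\mr{rank}(\mathbf{A}) = r \dim V_x$ and $\mr{rank}(\mathbf{B}) = r \dim U_x$. Inspecting the construction preceding Theorem \ref{criterion}, the entries of $A$ are built from $(1,0)$-forms $dz^\alpha$ with $n_0 + 1 \le \alpha \le n$, an empty range when $n_0 = n$, so $A$ is automatically absent and we are left with $R = -B \wedge \o{B}^\top$ together with $\mr{rank}(\mathbf{B}) = rn$. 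Conversely, given a unitary frame in which $R = -B \wedge \o{B}^\top$ with $\mr{rank}(\mathbf{B}) = rn$, I would set $V_x = \{0\}$ and $U_x = T^{1,0}_x X$; then $A = 0$ in \eqref{cur4}, the condition \eqref{cur-con} is vacuous, and Theorem \ref{criterion} produces strong decomposable positivity of type I with this decomposition, which (by the observation recalled above) is exactly Nakano positivity.

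The dual Nakano statement follows by the symmetric argument, swapping $U_x \leftrightarrow V_x$ and $B \leftrightarrow A$, and using $\dim V_x = n$ in place of $\dim U_x = n$ to get $\mr{rank}(\mathbf{A}) = rn$. I do not anticipate any substantial obstacle: this is a strict corollary, and the only small verifications are that when one subspace in the decomposition is trivial the corresponding matrix drops out of \eqref{cur4} automatically and that \eqref{cur-con} becomes vacuous when one of the two families of forms is empty.
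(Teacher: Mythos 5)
Your proposal is correct and matches the paper's route exactly: the paper states Corollary \ref{corNakano} as an immediate consequence of Theorem \ref{criterion}, obtained by specializing to the degenerate decompositions $V_x=\{0\}$ (so $A$ is absent, \eqref{cur-con} is vacuous, and $\mr{rank}(\mathbf{B})=rn$) and $U_x=\{0\}$ for the dual case. No further comment is needed.
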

\begin{remark}
Note that the above corollary for dual Nakano positivity was previously observed by F. Fagioli \cite[{Page 13, Positivity in [Fin20]}]{Fag20} as a statement without proof.
\end{remark}

\subsection{Weak positivity of Schur forms}

In this subsection, we will prove the weak positivity of Schur forms for strongly decomposably positive vector bundles of type I.

\subsubsection{Schur polynomial}
 Each partition $\lambda \in \Lambda(k, r)$ gives rise to a Schur polynomial $P_\lambda \in \mathbb{Q}\left[c_1, \ldots, c_r\right]$ of degree $k$, defined as $k\times k$ determinant
\begin{equation*}
   P_\lambda\left(c_1, \ldots, c_r\right)=\operatorname{det}\left(c_{\lambda_i-i+j}\right)_{1 \leqslant i, j \leqslant k},
\end{equation*}
where by convention $c_0=1$ and $c_i=0$ if $i \notin[0, r]$. 

Denote by $\mathrm{M}_r(\mathbb{C})$ and $\mathrm{G L}_r(\mathbb{C})$ the vector spaces of $r \times r$ matrices and the general linear group of degree $r$. A map $P: \mathrm{M}_r(\mathbb{C}) \rightarrow \mathbb{C}$ is called $\mathrm{G L}_r(\mathbb{C})$-invariant if it is invariant under the conjugate action of $\mathrm{G L}_r(\mathbb{C})$ on $\mathrm{M}_r(\mathbb{C})$.
Now we define the following $\mathrm{G L}_r(\mathbb{C})$-invariant function $c_i: \mathrm{M}_r(\mathbb{C}) \rightarrow \mathbb{C}, i=1, \ldots, r$ by 
$$
\operatorname{det}\left(I_r+t X\right)=\sum_{i=0}^r t^i \cdot c_i(X),
$$
where $I_r$ is the identity matrix in $\mr{M}_r(\mb{C})$.
Then the graded ring of $\mathrm{G L}_r(\mathbb{C})$-invariant homogeneous polynomials on $\mathrm{M}_r(\mathbb{C})$, which we denote here by $\mathrm{I}(r)=\bigoplus_{k=0}^{+\infty} \mathrm{I}(r)_k$, is multiplicatively generated by $c_1, \ldots, c_r$.

Let $(E,h^E)$ be a Hermitian vector bundle, the $i$-th Chern form $c_i(E,h^E)$ is defined by
$$
c_i\left(E, h^E\right)=c_i\left(\frac{\sqrt{-1} }{2 \pi}R^E\right) .
$$
For each $\lambda\in \Lambda(k,r)$, recall the Schur form (see Introduction) can be given by
\begin{equation*}
  P_\lambda(c(E,h^E))=P_\lambda(c_1(E,h^E),\ldots, c_r(E,h^E)),
\end{equation*}
which represents the {\it Schur class}
\begin{equation*}
  P_\lambda(c(E)):=P_{\lambda}(c_1(E),\ldots, c_r(E))\in \mr{H}^{2k}(X,\mb{Z}).
\end{equation*}

\subsubsection{Griffiths cone}
By \cite[Page 242, (5.6)]{Griffiths}, each $P\in \mathrm{I}(r)_k$ can be written as 
\begin{equation}\label{exp}
  P(B)=\sum_{\sigma, \tau \in S_k} \sum_{\rho \in[1, r]^k} p_{\rho \sigma \tau} B_{\rho_{\sigma(1)} \rho_{\tau(1)}} \cdots B_{\rho_{\sigma(k)} \rho_{\tau(k)}},
\end{equation}
where $B_{\lambda \mu}, \lambda, \mu=1, \ldots, r$ are the components of the matrix $B$, $S_k$ is the permutation group on $k$ indices and $[1, r]:=\{1, \ldots, r\}$.
An element $P\in \mathrm{I}(r)_k$ is called Griffiths non-negative if it can be expressed in the form \eqref{exp} with
$$
p_{\rho \sigma \tau}=\sum_{t \in T} \lambda_{\rho t} \cdot q_{\rho \sigma t} \bar{q}_{\rho \tau t},
$$
for some finite set $T$, some real numbers $\lambda_{\rho t} \geqslant 0$, and complex numbers $q_{\rho \sigma t}$.

The {\it Griffiths cone} $\Pi(r) \subset \mathrm{I}(r)$ is defined as the cone of Griffiths non-negative polynomials.  
\begin{proposition}[{Fulton-Lazarsfeld \cite[Proposition A.3]{FL}}]
Let
$$
P=\sum_{\lambda \in \Lambda(k, r)} a_\lambda(P) P_\lambda \quad\left(a_\lambda(P) \in \mathbb{Q}\right)
$$
be a non-zero weighted homogeneous polynomial in $\mathbb{Q}\left[c_1, \ldots, c_r\right]$. Then $P$ lies in the Griffiths cone $\Pi(r)$ if and only if each of the Schur coefficients $a_\lambda(P)$ is non-negative.
\end{proposition}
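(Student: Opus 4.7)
The plan is to prove the two implications separately, using the Weyl construction for the sufficient direction and a Schubert-calculus extraction for the necessary direction.

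For the \emph{sufficiency} direction, since $\Pi(r)$ is a convex cone closed under multiplication by non-negative scalars, it is enough to verify that $P_\lambda \in \Pi(r)$ for every partition $\lambda \in \Lambda(k,r)$. The standard Weyl / Young-symmetrizer realization of the irreducible $\mathrm{GL}_r(\mathbb{C})$-representation of highest weight $\lambda$ furnishes the identity
\[
P_\lambda(B) \;=\; \tfrac{1}{h(\lambda)}\,\mathrm{tr}\!\bigl(B^{\otimes k} \circ c_\lambda\bigr) \;=\; \tfrac{1}{h(\lambda)^2}\,\mathrm{tr}\!\bigl(c_\lambda^{\ast}\,B^{\otimes k}\,c_\lambda\bigr)
\]
on $(\mathbb{C}^r)^{\otimes k}$, where $c_\lambda = a_\lambda b_\lambda$ is the Young symmetrizer of a chosen tableau of shape $\lambda$ and $h(\lambda)$ is the product of its hook lengths (so $c_\lambda^2 = h(\lambda)c_\lambda$). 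Expanding in the standard tensor basis $\{e_\rho\}$ and collecting the double sum arising from the two copies of $c_\lambda$, the coefficient $p_{\rho\sigma\tau}$ of the monomial in \eqref{exp} factors as $\sum_t \lambda_{\rho t}\, q_{\rho\sigma t}\,\bar q_{\rho\tau t}$ with $\lambda_{\rho t} \ge 0$, which is exactly the sum-of-squares structure required by the definition of $\Pi(r)$.

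For the \emph{necessity} direction, evaluate $P$ on the curvature of the universal quotient bundle $(Q, h^Q)$ of the Grassmannian $G(r, N)$ for $N \gg 0$. This bundle is Nakano positive, hence strongly decomposably positive of type I by Corollary~\ref{corNakano}, so Theorem~\ref{main-thm} gives that $P(c(Q, h^Q)) = \sum_\nu a_\nu(P)\, P_\nu(c(Q, h^Q))$ is weakly positive for every $P \in \Pi(r)$. Schubert calculus on Schubert subvarieties $V_\mu \subset G(r, N)$ produces, for each $\mu \in \Lambda(k,r)$, a closed strongly positive form $\eta_\mu$ of complementary degree such that $\int_{V_\mu} P_\nu(c(Q|_{V_\mu})) \wedge \eta_\mu = \delta_{\nu\mu}$ (the Poincar\'e duals of the Schubert cycles). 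Integrating the weakly positive form $P(c(Q, h^Q))$ against the strongly positive form $\eta_\mu$ over $V_\mu$ then yields $a_\mu(P) = \int_{V_\mu} P(c(Q|_{V_\mu})) \wedge \eta_\mu \ge 0$.

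The main obstacle is the necessity direction: exhibiting dual test pairs $(V_\mu, \eta_\mu)$ that separate individual Schur classes requires non-trivial Schubert-calculus input --- essentially the Kempf--Laksov determinantal formula together with the self-duality of Schubert classes on the Grassmannian. A purely algebraic route would have to characterize $\Pi(r)$ intrinsically as the dual cone of some concrete object (for instance through the Hall self-pairing on symmetric functions), but this seems no simpler than the geometric extraction; Fulton--Lazarsfeld's original proof follows the Schubert-calculus line sketched above.
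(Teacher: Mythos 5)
The paper does not prove this statement; it is quoted verbatim from Fulton--Lazarsfeld \cite[Proposition A.3]{FL}, so there is no internal proof to compare against. Your sketch follows the general lines of the original Fulton--Lazarsfeld argument (representation theory for sufficiency, the universal quotient bundle on the Grassmannian plus Schubert duality for necessity), but as written the necessity direction contains a genuine logical gap. You argue that ``Theorem~\ref{main-thm} gives that $P(c(Q,h^Q))=\sum_\nu a_\nu(P)P_\nu(c(Q,h^Q))$ is weakly positive for every $P\in\Pi(r)$.'' Theorem~\ref{main-thm} controls the individual forms $P_\nu(c(Q,h^Q))$; to pass from there to the sum you would need to know the signs of the $a_\nu(P)$, which is precisely the conclusion you are trying to reach. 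The correct input is that membership $P\in\Pi(r)$ \emph{itself} --- i.e.\ the Gram structure $p_{\rho\sigma\tau}=\sum_t\lambda_{\rho t}q_{\rho\sigma t}\bar q_{\rho\tau t}$ --- forces $P(c(Q,h^Q))$ to be weakly non-negative once the curvature has the form $A\wedge\o{A}^\top$; this is the computation of Section~4.2.3 of the paper run for a general Griffiths non-negative $P$ rather than for a single $P_\lambda$. Relatedly, the universal quotient bundle on $G(r,N)$ is dual Nakano \emph{non-negative} (globally generated, nef, but not ample for $r\ge 2$), not Nakano positive, so the strict-positivity theorem does not apply; you need the semi-positive statement, and the coefficient extraction is cleaner by integrating the weakly non-negative $(k,k)$-form $P(c(Q,h^Q))$ directly over the $k$-dimensional Schubert variety dual to $P_\mu$ (where $\int P_\nu=\delta_{\nu\mu}$), rather than by producing auxiliary closed strongly positive forms $\eta_\mu$ on higher-dimensional cycles, whose existence with the exact biorthogonality property is an additional unproved claim.

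The sufficiency direction is essentially right in spirit but the specific identity needs repair: $c_\lambda=a_\lambda b_\lambda$ is not self-adjoint ($c_\lambda^{\ast}=b_\lambda a_\lambda$), so $\operatorname{tr}(c_\lambda^{\ast}B^{\otimes k}c_\lambda)$ does not equal $h(\lambda)\operatorname{tr}(B^{\otimes k}c_\lambda)$ in general, and the positive semi-definiteness of the resulting coefficient matrix in $(\sigma,\tau)$ is exactly the point at issue. The standard fix --- and the one the paper actually imports via \eqref{exp1} --- is to use the unitary matrix coefficients $(a_{ij}(\sigma))\in\mathrm{U}(m)$ of the irreducible representation attached to $\lambda$, which exhibits $p_{\rho\sigma\tau}=\left(\frac{1}{k!}\right)^2\sum_{i,j}a_{ij}(\tau)\overline{a_{ij}(\sigma)}$ as a manifest Gram matrix. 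You should also track the transposition $\lambda\leftrightarrow\lambda'$ coming from the second Jacobi--Trudi identity \eqref{Schur-equ} when matching $P_\lambda$ to a Weyl module.
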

In particular, for each $\lambda\in \Lambda(k,r)$, one has
\begin{equation*}
  P_\lambda(B)=\sum_{\sigma, \tau \in S_k} \sum_{\rho \in[1, r]^k} p_{\rho \sigma \tau} B_{\rho_{\sigma(1)} \rho_{\tau(1)}} \cdots B_{\rho_{\sigma(k)} \rho_{\tau(k)}},
\end{equation*}
where $p_{\rho \sigma \tau}=\sum_{1 \leq i, j \leq m} \left(\frac{1}{k!}\right)^2a_{ij}(\tau) \overline{a_{ij}(\sigma)}$ with $(a_{ij}(\tau))\in \mathrm{U}(m)$, see \cite[(A.6)]{FL}. Denote $T=[1,m]^2$ and $q_{\sigma t}:=\o{a_t(\sigma)}$ for any $t\in T$, then 
\begin{equation}\label{exp1}
  P_\lambda(B)=\left(\frac{1}{k!}\right)^2\sum_{\sigma, \tau \in S_k} \sum_{\rho \in[1, r]^k} \left(\sum_{t\in T}q_{\sigma t}\o{q_{\tau t}}\right) B_{\rho_{\sigma(1)} \rho_{\tau(1)}} \cdots B_{\rho_{\sigma(k)} \rho_{\tau(k)}}.
\end{equation}

\subsubsection{Weak positivity of Schur forms}

We assume that $(E,h^E)$ is a strongly decomposably positive vector bundle of type I over a complex manifold $X$. By Theorem \ref{criterion},
for any $x\in X$, there exists a decomposition $T_x^{1,0}X=U_x\oplus V_x$ such that
 the Chern curvature matrix $R$ of $(E,h^E)$ has the form 
\begin{equation}\label{cur5}
  R=-B\wedge\o{B}^\top+A\wedge \o{A}^\top
\end{equation}
with respect to a unitary frame,
where $B$ is a $r\times N$ matrix with  $(0,1)$-forms in $\o{U^*_x}$ as entries, $A$ is a $r\times N$ matrix  with $(1,0)$-forms in $V^*_x$ as entries. Moreover, $\mathrm{rank}(\mathbf{A})=r\cdot \dim V_x$,  $\mathrm{rank}(\mathbf{B})=r\cdot \dim U_x$.

For each $\lambda\in \Lambda(k,r)$, by \eqref{exp1}, the Schur form $P_\lambda(c(E,h^E))$ is given by
\begin{align*}
\begin{split}
   P_\lambda(c(E,h^E))=\left(\frac{\sqrt{-1}}{2\pi}\right)^k\frac{1}{(k!)^2}\sum_{\sigma,\tau\in S_k}\sum_{\rho\in [1,r]^k}\left(\sum_{t\in T}q_{\sigma t}\o{q_{\tau t}}\right)\cdot \bigwedge_{j=1}^k R_{\rho_{\sigma(j)}\o{\rho_{\tau(j)}}}.
 \end{split}
\end{align*}
By \eqref{cur5}, the Chern curvature matrix satisfies
\begin{align*}
\begin{split}
  R_{\rho_{\sigma(j)}\o{\rho_{\tau(j)}}}&=(B_{\rho_{\tau(j)}c_j\b{\beta_j}}\o{B_{\rho_{\sigma(j)}c_j\o{\alpha_j}}}+A_{\rho_{\tau(j)}c_j\alpha_j}\o{A_{\rho_{\sigma(j)}c_j\beta_j}})dz^{\alpha_j}\wedge d\b{z}^{\beta_j}\\
  &=\sum_{c_j=1}^N(\o{B_{\rho_{\sigma(j)}c_j}}\wedge B_{\rho_{\tau(j)}c_j}+A_{\rho_{\tau(j)}c_j}\wedge \o{A_{\rho_{\sigma(j)}c_j}})\\
  &  =\sum_{c_j=1}^N\sum_{\epsilon_j\in\{0,1\}}(\o{B_{\rho_{\sigma(j)}c_j}}\wedge B_{\rho_{\tau(j)}c_j})^{\epsilon_j}\wedge(A_{\rho_{\tau(j)}c_j}\wedge \o{A_{\rho_{\sigma(j)}c_j}})^{1-\epsilon_j},
 \end{split}
\end{align*}
which follows that
\begin{align*}
\begin{split}
&  \bigwedge_{j=1}^k R_{\rho_{\sigma(j)}\o{\rho_{\tau(j)}}}=\bigwedge_{j=1}^k\sum_{c_j=1}^N\sum_{\epsilon_j\in\{0,1\}}(\o{B_{\rho_{\sigma(j)}c_j}}\wedge B_{\rho_{\tau(j)}c_j})^{\epsilon_j}\wedge(A_{\rho_{\tau(j)}c_j}\wedge \o{A_{\rho_{\sigma(j)}c_j}})^{1-\epsilon_j}\\
  &=\sum_{c\in [1,N]^k}\sum_{\epsilon\in \{0,1\}^k}\bigwedge_{j=1}^k(\o{B_{\rho_{\sigma(j)}c_j}}\wedge B_{\rho_{\tau(j)}c_j})^{\epsilon_j}\wedge(A_{\rho_{\tau(j)}c_j}\wedge \o{A_{\rho_{\sigma(j)}c_j}})^{1-\epsilon_j}\\
  &=\sum_{c\in [1,N]^k}\sum_{\epsilon\in \{0,1\}^k}\bigwedge_{j=1}^k(-1)^{\epsilon_j+1}(\o{B_{\rho_{\sigma(j)}c_j}})^{\epsilon_j}\wedge (\o{A_{\rho_{\sigma(j)}c_j}})^{1-\epsilon_j}\wedge (B_{\rho_{\tau(j)}c_j})^{\epsilon_j}\wedge(A_{\rho_{\tau(j)}c_j})^{1-\epsilon_j}\\
  &=\sum_{c\in [1,N]^k}\sum_{\epsilon\in \{0,1\}^k}(-1)^{|\epsilon|+k}(-1)^{\frac{k(k-1)}{2}}\cdot\\
  &\quad (\bigwedge_{j=1}^k(\o{B_{\rho_{\sigma(j)}c_j}})^{\epsilon_j}\wedge (\o{A_{\rho_{\sigma(j)}c_j}})^{1-\epsilon_j})\wedge (\bigwedge_{j=1}^k(B_{\rho_{\tau(j)}c_j})^{\epsilon_j}\wedge(A_{\rho_{\tau(j)}c_j})^{1-\epsilon_j}),
 \end{split}
\end{align*}
where $|\epsilon|:=\sum_{j=1}^k \epsilon_j$.

Recall that $\rho\in [1,r]^k$, $t\in T$, $c\in [1,N]^k$ and $\epsilon\in \{0,1\}^k$, we obtain that
\begin{align*}
\begin{split}
  &P_\lambda(c(E,h^E))=\left(\frac{\sqrt{-1}}{2\pi}\right)^k\frac{1}{(k!)^2}(-1)^{\frac{k(k-1)}{2}}\sum_{\rho,t,c,\epsilon}(-1)^{|\epsilon|+k}\cdot\\
&  (\sum_{\sigma\in S_k}q_{\sigma t}\bigwedge_{j=1}^k(\o{B_{\rho_{\sigma(j)}c_j}})^{\epsilon_j}\wedge (\o{A_{\rho_{\sigma(j)}c_j}})^{1-\epsilon_j})\wedge (\sum_{\tau\in S_k}\o{q_{\tau t}}\bigwedge_{j=1}^k(B_{\rho_{\tau(j)}c_j})^{\epsilon_j}\wedge(A_{\rho_{\tau(j)}c_j})^{1-\epsilon_j}).
 \end{split}
\end{align*}
Now we set
\begin{equation}\label{psi}
  \psi_{\rho t c\epsilon}:=\sum_{\sigma\in S_k}q_{\sigma t}\bigwedge_{j=1}^k(\o{B_{\rho_{\sigma(j)}c_j}})^{\epsilon_j}\wedge (\o{A_{\rho_{\sigma(j)}c_j}})^{1-\epsilon_j},
\end{equation}
which is a $(|\epsilon|,k-|\epsilon|)$-form.
Hence
\begin{align}\label{schur forms}
\begin{split}
  P_\lambda(c(E,h^E))=\left(\frac{1}{2\pi}\right)^k\left(\frac{1}{k!}\right)^2(\sqrt{-1})^{k^2}\sum_{\rho,t,c,\epsilon}(-1)^{|\epsilon|+k}\psi_{\rho t c\epsilon}\wedge \o{\psi_{\rho t c\epsilon}}.
 \end{split}
\end{align}

For any non-zero decomposable $(n-k,0)$-form $\eta=\eta_1\wedge\cdots\wedge\eta_{n-k}$, where $\eta_i,1\leq i\leq n-k$, are $(1,0)$-forms, we assume that $\eta_1,\cdots,\eta_{i_0}\in U_x^*$ and $\eta_{i_0+1},\cdots,\eta_{n-k}\in V_x^*$. 
Now we can take local holomorphic coordinates $\{z^\alpha\}_{1\leq\alpha\leq n}$ around $x\in X$ such that 
\begin{equation*}
  U_x^*= \mathrm{span}_{\mb{C}}\{dz^1|_x,\cdots, dz^{n_0}|_x\},\text{ with } dz^j|_x=\eta_j,\quad 1\leq j\leq i_0
\end{equation*}
and 
\begin{equation*}
  V_x^*= \mathrm{span}_{\mb{C}}\{dz^{n_0+1}|_x,\cdots, dz^{n}|_x\},\text{ with } dz^{n_0-i_0+j}|_x=\eta_j,\quad i_0+1\leq j\leq n-k,
\end{equation*}
and so $\psi_{\rho t c \epsilon}$ can be written in the following form 
\begin{multline*}
	 \psi_{\rho t c \epsilon}=\sum_{1\leq\alpha_1<\cdots<\alpha_{|\epsilon|}\leq n_0\atop n_0+1\leq\beta_1<\cdots<\beta_{k-|\epsilon|}\leq n}\psi_{\alpha_1\cdots\alpha_{|\epsilon|}\b{\beta}_1\cdots\b{\beta}_{k-|\epsilon|}}dz^{\alpha_1}\wedge\cdots\wedge dz^{\alpha_{|\epsilon|}}\\
	 \wedge d\b{z}^{\beta_1}\wedge\cdots\wedge d\b{z}^{\beta_{k-|\epsilon|}}.
\end{multline*}
Then 
\begin{align}\label{schur1}
\begin{split}
& \quad (\sqrt{-1})^{k^2}\sum_{\rho,t,c,\epsilon}(-1)^{|\epsilon|+k}\psi_{\rho t c\epsilon}\wedge \o{\psi_{\rho t c\epsilon}}\wedge (\sqrt{-1})^{(n-k)^2}\eta\wedge\o{\eta}\\
&=(\sqrt{-1})^{k^2}\sum_{\rho,t,c,|\epsilon|=n_0-i_0}(-1)^{n_0-i_0+k}(\sqrt{-1})^{(n-k)^2}dz^1\wedge\cdots \wedge dz^{i_0}\wedge \\&dz^{n_0+1}\wedge\cdots\wedge dz^{n-i_0+n-k}\wedge
d\b{z}^1\wedge \cdots\wedge d\b{z}^{i_0}\wedge d\b{z}^{n_0+1}\wedge\cdots\wedge d\b{z}^{n-i_0+n-k}\wedge\\
&\left(\psi_{i_0+1, \cdots,{n_0},\o{n_0-i_0+n-k+1},\cdots,\b{n}}dz^{i_0+1}\wedge\cdots\wedge dz^{n_0}\wedge d\b{z}^{n_0-i_0+n-k+1}\wedge\cdots\wedge d\b{z}^{n}\right)\wedge\\
&\left(\o{\psi_{i_0+1, \cdots,{n_0},\o{n_0-i_0+n-k+1},\cdots,\b{n}}}d\b{z}^{i_0+1}\wedge\cdots\wedge d\b{z}^{n_0}\wedge d{z}^{n_0-i_0+n-k+1}\wedge\cdots\wedge d{z}^{n}\right)\\
&=\sum_{\rho,t,c,|\epsilon|=n_0-i_0}|\psi_{i_0+1, \cdots,{n_0},\o{n_0-i_0+n-k+1},\cdots,\b{n}}|^2\cdot\\
&\quad (\sqrt{-1})^{n^2}dz^1\wedge \cdots\wedge dz^n\wedge d\b{z}^1\wedge\cdots\wedge d\b{z}^n,
 \end{split}
\end{align}
which is a non-negative volume form. By \eqref{schur forms}, the Schur form $P_\lambda(c(E,h^E))$ is weakly non-negative. 

We will show the weak positivity of Schur form $P_\lambda(c(E,h^E))$ using a proof by contradiction. Specifically, we will derive a contradiction when assuming $P_\lambda(c(E,h^E))\wedge(\sqrt{-1})^{(n-k)^2}\eta\wedge\b{\eta}=0$.

 By \eqref{schur1}, one knows that 
$$P_\lambda(c(E,h^E))\wedge(\sqrt{-1})^{(n-k)^2}\eta\wedge\b{\eta}=0$$ if and only if 
\begin{equation}\label{psi1}
  \psi_{i_0+1, \cdots,{n_0},\o{n_0-i_0+n-k+1},\cdots,\b{n}}=0
\end{equation}
for any $\rho,t,c,|\epsilon|=n_0-i_0$.

Now we take a special vector $\epsilon=(\underbrace{1,\cdots,1}_{n_0-i_0},0\cdots,0)$ and denote $j_0=n_0-i_0$, by \eqref{psi}, then 
\begin{align*}
  \psi_{\rho t c\epsilon}&=\sum_{\sigma \in S_k} q_{\sigma t}\o{B_{\rho_{\sigma(1)}c_1}}\wedge \cdots\wedge \o{B_{\rho_{\sigma(j_0)}c_{j_0}}}\wedge \o{A_{\rho_{\sigma(j_0+1)}c_{j_0+1}}}\wedge\cdots \wedge \o{A_{\rho_{\sigma(k)}c_k}}.
\end{align*}
Combining with \eqref{psi1}, one has
\begin{align}\label{psi2}
\begin{split}
 0 &=\psi_{i_0+1, \cdots,{n_0},\o{n_0-i_0+n-k+1},\cdots,\b{n}}\\
  &=\sum_{\tau_1\in S_{j_0}}\sum_{\tau_2\in S_{k-j_0}}\sum_{\sigma \in S_k} \mr{sgn}(\tau_1)\mr{sgn}(\tau_2)q_{\sigma t}\cdot \\
  &\o{B_{\rho_{\sigma(1)}c_1\o{\tau_1(i_0+1)}}}\cdots\o{B_{\rho_{\sigma(j_0)}c_{j_0}\o{\tau_1(n_0)}}}\cdot \o{A_{\rho_{\sigma(j_0+1)}c_{j_0+1}{\tau_2(j_0+n-k+1)}}}\cdots \o{A_{\rho_{\sigma(k)}c_k{\tau_2(n)}}}.
 \end{split}
\end{align}
Since $\mathrm{rank}(\mathbf{A})=r\cdot \dim V_x$,  $\mathrm{rank}(\mathbf{B})=r\cdot \dim U_x$, without loss of generality, we assume that the submatrices
$$\mathbf{B}'=(\mathbf{B}_{c,i\alpha})_{1\leq c\leq r\dim U_x, 1\leq i\leq r,1\leq \alpha\leq \dim U_x}$$
and 
\begin{equation*}
  \mathbf{A}'=(\mathbf{A}_{c,j\alpha})_{1\leq c\leq r\dim V_x, 1\leq j\leq r,1\leq \alpha\leq \dim V_x}
\end{equation*}
of $\mathbf{B}$ and $\mathbf{A}$ are inverse. By \eqref{psi2} and note that 
$\mathbf{B}'_{c,i\alpha}=B_{ic\b{\alpha}}$ and $\mathbf{A}'_{c,j\alpha}=A_{jc\alpha}$, one has
\begin{align}\label{psi3}
\begin{split}
  0&=\sum_{\tau_1\in S_{j_0}}\sum_{\tau_2\in S_{k-j_0}}\sum_{c_1,\cdots,c_{j_0}=1}^{r n_0}\sum_{c_{j_0+1},\cdots,c_{k}=1}^{r (n-n_0)}\sum_{\sigma \in S_k} \mr{sgn}(\tau_1)\mr{sgn}(\tau_2)q_{\sigma t}\cdot \\
  &\o{\mathbf{B}'_{c_1,\rho_{\sigma(1)}{\tau_1(i_0+1)}}}\cdots\o{\mathbf{B}'_{c_{j_0},\rho_{\sigma(j_0)}{\tau_1(n_0)}}}\cdot \o{\mathbf{A}'_{c_{j_0+1},\rho_{\sigma(j_0+1)}{\tau_2(j_0+n-k+1)}}}\cdots \o{\mathbf{A}'_{c_k,\rho_{\sigma(k)}{\tau_2(n)}}}\cdot\\
  &\o{\mathbf{A}'^{-1}_{l_k\beta_k,c_k}}\cdots \o{\mathbf{A}'^{-1}_{l_{j_0+1}\beta_{j_0+1},c_{j_0+1}}}\o{\mathbf{B}'^{-1}_{l_{j_0}\beta_{j_0},c_{j_0}}}\cdots \o{\mathbf{B}'^{-1}_{l_{1}\beta_{1},c_{1}}}\\
  &=\sum_{\tau_1\in S_{j_0}}\sum_{\tau_2\in S_{k-j_0}}\sum_{\sigma \in S_k} \mr{sgn}(\tau_1)\mr{sgn}(\tau_2)q_{\sigma t}\cdot\delta_{\rho_{\sigma(k)}l_k}\delta_{\tau_2(n)\beta_k}\cdots\delta_{\rho_{\sigma(j_0+1)}l_{j_0+1}}\cdot \\
  &\delta_{\tau_2(j_0+n-k+1)\beta_{j_0+1}}\delta_{\rho_{\sigma(j_0)}l_{j_0}}\delta_{\tau_1(n_0)\beta_{j_0}}\cdots\delta_{\rho_{\sigma(1)}l_1}\delta_{\tau_1(i_0+1)\beta_1}
 \end{split}
\end{align}
for any $(\beta_1,\cdots,\beta_{j_0})\in [1,n_0]^{j_0}$, $(\beta_{j_0+1},\cdots,\beta_k)\in [n_0+1,n]^{n-j_0}$ and $(l_1,\cdots,l_k)\in [1,r]^k$. 

By taking 
\begin{equation*}
  \beta_s= \begin{cases}
 	i_0+s& 1\leq s\leq j_0 ,\\
 	n-k+s& j_0+1\leq s\leq k,
 \end{cases}
\end{equation*}
then \eqref{psi3} becomes 
\begin{equation}\label{psi4}
  \sum_{\sigma\in S_k} q_{\sigma t}\delta_{\rho_{\sigma(1)}l_1}\cdots\delta_{\rho_{\sigma(k)}l_k}=0
\end{equation}
for any $\rho,l\in [1,r]^k$ and $t\in T$. 
\begin{remark}\label{rem:equ}
Note that \eqref{psi4} holds if and only if $\psi_{\rho t c \epsilon}=0$ for any $\rho,t,c,\epsilon$. In fact, if $\psi_{\rho t c \epsilon}=0$, then \eqref{psi1} holds, and follows \eqref{psi4}. Conversely, if  \eqref{psi4} holds, then 
\begin{align*}
\begin{split}
  \psi_{\rho t c \epsilon}=\sum_{l\in[1,r]^k} (\sum_{\sigma\in S_k} q_{\sigma t}\delta_{\rho_{\sigma(1)}l_1}\cdots\delta_{\rho_{\sigma(k)}l_k})\bigwedge_{j=1}^k\o{B_{l_jc_j}}^{\epsilon_j}\wedge \o{A_{l_jc_j}}^{1-\epsilon_j}=0.
 \end{split}
\end{align*}
\end{remark}
\vspace{5mm}
For $k\leq r$, one can take $\rho_i=l_i=i$ for $1\leq i\leq k$. Thus
	\begin{align*}
\begin{split}
 0=\sum_{\sigma\in S_k} q_{\sigma t}\delta_{\rho_{\sigma(1)}l_1}\cdots\delta_{\rho_{\sigma(k)}l_k}=q_{\mathrm{Id},t}
 \end{split}
\end{align*}
for any $t\in T$, which is a contradiction since $(q_{\mathrm{Id},t})_{t\in T}\in \mathrm{U}(m)$ is a unitary matrix. Hence all $(k,k)$-Schur forms $P_\lambda(c(E,h^E))$ are weakly positive for any $k\leq r$. In particular, all Chern forms $c_i(E,h^E),1\leq i\leq r$, are weakly positive. 

For general $k$ and $r$, we take $l_1,\cdots,l_k$ in \eqref{psi4} to be 
\begin{equation*}
 l_i=\rho_i, \text{ for }1\leq i\leq k,
\end{equation*}
and \eqref{psi4} implies that
\begin{equation}\label{psi5}
  \sum_{\rho\in [1,r]^k}\sum_{\sigma\in S_k}\chi_\lambda (\sigma)\delta_{\rho_1\rho_{\sigma(1)}}\cdots\delta_{\rho_k\rho_{\sigma(k)}}=0,
\end{equation}
where 
$$\chi_\lambda(\sigma)=\mathrm{Tr}(\o{q_{\sigma t}})=\sum_{i=1}^m a_{ii}(\sigma)$$ 
is the character of the representation $\phi_\lambda(\sigma)=(a_{ij}(\sigma))\in \mr{U}(m)$ corresponding to the partition $\lambda$. From \cite[(A.5)]{FL}, \eqref{psi5} is equivalent to 
\begin{equation}\label{psi6}
  P_\lambda (I_r)=0.
\end{equation}
Here $P_\lambda(\bullet)$ denotes the  invariant
polynomial corresponding to the Schur function $P_\lambda$ under the isomorphism
$\mathrm{I}(r)\cong \mb{Q}(c_1,\cdots,c_r)$.

Denote by  $x_1,\cdots,x_r$ the Chern roots, which are defined by
\begin{equation*}
  \sum_{j=0}^r c_jt^j=(1+tx_1)(1+tx_2)\cdots(1+tx_r).
\end{equation*}
Recall that the Schur polynomial is defined by
\begin{align*}
\begin{split}
    P_\lambda\left(c_1, \ldots, c_r\right)&=\operatorname{det}\left(c_{\lambda_j-j+l}\right)_{1 \leqslant j, l \leqslant k},
 \end{split}
\end{align*}
where $\lambda=(\lambda_1,\cdots,\lambda_k)\in \Lambda(k,r)$ is a partition  satisfying
 $$\sum_{i=1}^k\lambda_i=k \text{ and } r\geq \lambda_1\geq\cdots\geq\lambda_k\geq 0.$$ Denote by $\lambda'$ the conjugate partition to the partition $\lambda$, see e.g. \cite[Section 4.1, Page 45]{FH}, then 
\begin{equation}\label{conj-part}
  \lambda'=(\lambda_1',\cdots,\lambda_r'),\quad\text{ with } \sum_{i=1}^r\lambda'_i=k\text{ and } \lambda'_1\geq\cdots\geq\lambda'_r\geq 0.
\end{equation}
 The second Jacobi-Trudi identity (or Giambell's formula) gives 
\begin{align}\label{Schur-equ}
\begin{split}
    P_\lambda\left(c_1, \ldots, c_r\right)&=\operatorname{det}\left(c_{\lambda_j-j+l}\right)_{1 \leqslant j, l \leqslant k}=s_{\lambda'}(x_1,\cdots,x_r),
 \end{split}
\end{align}
see e.g. \cite[Page 455, (A.6)]{FH},
where 
\begin{equation}\label{Schur function}
  s_{\lambda'}(x_1,\cdots,x_r):=\frac{\left|\begin{array}{cccc}x_1^{\lambda'_1+r-1} & x_2^{\lambda'_1+r-1} & \ldots & x_r^{\lambda'_1+r-1} \\ x_1^{\lambda'_2+r-2} & x_2^{\lambda'_2+r-2} & \ldots & x_r^{\lambda'_2+r-2} \\ \vdots & \vdots & \ddots & \vdots \\ x_1^{\lambda'_r} & x_2^{\lambda'_r} & \ldots & x_r^{\lambda'_r}\end{array}\right|}{\prod_{1 \leq i<j \leq r}\left(x_i-x_j\right)}.
\end{equation}
In particular, we have
\begin{align}\label{contr}
\begin{split}
 P_\lambda(I_r)&=P_\lambda\left(c_1=C^1_r,\cdots, c_i=C_r^i,\cdots,c_r=C^r_r\right)\\
 &=s_{\lambda'}(1,\cdots,1)\\
 &=\prod_{1 \leq i<j \leq r} \frac{\lambda'_i-\lambda'_j+j-i}{j-i}\geq 1,
 \end{split}
\end{align}
where the third equality follows from \cite[Page 461, (ii)]{FH}. Hence, $ P_\lambda(I_r)\neq 0$, which contradicts to \eqref{psi6}. Thus,
$$P_\lambda(c(E,h^E))\wedge(\sqrt{-1})^{(n-k)^2}\eta\wedge\b{\eta}>0$$
for any non-zero decomposable $(n-k,0)$-form $\eta$, and so $P_\lambda(c(E,h^E))$ is a weakly positive $(k,k)$-form. 

In a word, we obtain
\begin{theorem}
	Let $(E,h^E)$ be a strongly decomposably positive vector bundle of type I over a complex manifold $X$, $\mathrm{rank}E=r$, and $\dim X=n$. Then the Schur form 
	$P_\lambda(c(E,h^E))$ is weakly positive for any partition $\lambda\in \Lambda(k,r)$, $k\leq n$ and $k\in\mb{N}$.
\end{theorem}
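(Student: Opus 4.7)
The plan is to combine the criterion Theorem \ref{criterion} with the Fulton-Lazarsfeld Hermitian-square expansion of Schur polynomials, and then contract against a decomposable test form adapted to the splitting $T^{1,0}_x X = U_x \oplus V_x$ so that the signs $(-1)^{|\epsilon|+k}$ cease to be an obstruction. More precisely, by Theorem \ref{criterion} I may write the Chern curvature, in a unitary frame, as $R = -B \wedge \overline{B}^\top + A \wedge \overline{A}^\top$ with $B$ carrying $(0,1)$-forms in $\overline{U_x^*}$ and $A$ carrying $(1,0)$-forms in $V_x^*$, together with the rank conditions $\mathrm{rank}(\mathbf{A}) = r \dim V_x$ and $\mathrm{rank}(\mathbf{B}) = r \dim U_x$.

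Next I would substitute this expression into $\bigwedge_{j=1}^k R_{\rho_{\sigma(j)} \overline{\rho_{\tau(j)}}}$ and expand by multilinearity into a sum over $\epsilon \in [0,1]^k$ of $2^k$ terms, each of which factors cleanly into a $\sigma$-part and a $\tau$-part. Coupling this with the Hermitian-square structure $p_{\rho\sigma\tau} = \sum_{t\in T} q_{\sigma t}\overline{q_{\tau t}}$ from \eqref{exp1} produces the presentation \eqref{schur forms}, i.e.
\[
P_\lambda(c(E,h^E)) = \text{const} \cdot (\sqrt{-1})^{k^2}\sum_{\rho,t,c,\epsilon}(-1)^{|\epsilon|+k}\psi_{\rho t c \epsilon} \wedge \overline{\psi_{\rho t c \epsilon}},
\]
where each $\psi_{\rho t c \epsilon}$ is a $(|\epsilon|,k-|\epsilon|)$-form built from entries of $\overline{A}$ and $\overline{B}$.

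To prove weak positivity, I would pair with an arbitrary non-zero decomposable $(n-k,0)$-form $\eta = \eta_1 \wedge \cdots \wedge \eta_{n-k}$ and choose local holomorphic coordinates aligned with $U_x \oplus V_x$ so that each $\eta_i$ is one of the $dz^\alpha$. If $i_0$ of the $\eta_i$'s lie in $U_x^*$, then on wedging $\psi_{\rho t c \epsilon} \wedge \overline{\psi_{\rho t c \epsilon}} \wedge (\sqrt{-1})^{(n-k)^2}\eta \wedge \overline{\eta}$ up to top degree, only the single value $|\epsilon| = n_0 - i_0$ contributes non-trivially. This kills the alternating sign $(-1)^{|\epsilon|+k}$ (since it becomes a fixed overall sign), and the expression reduces to a manifestly non-negative sum of squared moduli of coefficients of $\psi_{\rho t c \epsilon}$, establishing weak non-negativity.

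For strict positivity, I would suppose the pairing vanishes and trace the equality back through the algebra. Vanishing forces each scalar coefficient $\psi_{\cdots}$ of the distinguished $\epsilon$ to vanish; the rank hypotheses on $\mathbf{A}$ and $\mathbf{B}$ supply inverses on appropriate submatrices, and inverting converts the vanishing into $\sum_{\sigma \in S_k} q_{\sigma t} \delta_{\rho_{\sigma(1)} l_1} \cdots \delta_{\rho_{\sigma(k)} l_k} = 0$ for every $\rho, l, t$. Summing over $\rho = l$ and invoking the Fulton-Lazarsfeld identity \cite[(A.5)]{FL} reinterprets this as $P_\lambda(I_r) = 0$. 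But the Jacobi-Trudi identity \eqref{Schur-equ} together with the Weyl dimension formula gives $P_\lambda(I_r) = s_{\lambda'}(1,\ldots,1) = \prod_{1 \le i<j \le r} \frac{\lambda'_i - \lambda'_j + j - i}{j-i} \ge 1$, a contradiction. I expect the main obstacle to be Step~3: organizing the expansion so that only one $|\epsilon|$ survives and verifying that the surviving terms genuinely form a sum of squares — this is where the geometric hypothesis (vanishing of cross curvature between $U_x$ and $V_x$) enters in an essential way, since without it the $(-1)^{|\epsilon|+k}$ signs would prevent the proof from closing.
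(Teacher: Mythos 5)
Your proposal is correct and follows essentially the same route as the paper: the criterion for type I positivity, the Fulton--Lazarsfeld Hermitian-square expansion leading to \eqref{schur forms}, the choice of coordinates adapted to $U_x\oplus V_x$ so that only $|\epsilon|=n_0-i_0$ survives in the pairing, and the reduction of the vanishing condition via the rank hypotheses to $P_\lambda(I_r)=0$, contradicted by $s_{\lambda'}(1,\ldots,1)\geq 1$. No substantive differences from the paper's argument.
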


In particular, if $(E,h^E)$ is Nakano positive, then the Chern curvature matrix has the form 
\begin{equation*}
  R=-B\wedge\o{B}^\top.
\end{equation*}
By considering $A=0$ in \eqref{psi}, then 
\begin{equation*}
  \psi_{\rho t c \epsilon_1}=\sum_{\sigma\in S_k}q_{\sigma t}\bigwedge_{j=1}^k\o{B_{\rho_{\sigma(j)}c_j}},\quad \epsilon_1=(1,\cdots,1)
\end{equation*}
and 
\begin{equation*}
 \psi_{\rho t c \epsilon}=0,\text{ for any }\epsilon\neq \epsilon_1,
\end{equation*}
By \eqref{schur forms}, one has
\begin{align}\label{schur forms 1}
\begin{split}
  P_\lambda(c(E,h^E))=\left(\frac{1}{2\pi}\right)^k\left(\frac{1}{k!}\right)^2(\sqrt{-1})^{k^2}\sum_{\rho,t,c}\psi_{\rho t c \epsilon_1}\wedge \o{\psi_{\rho t c\epsilon_1}},
 \end{split}
\end{align}
 where $\psi_{\rho t c\epsilon_1}$ is a $(k,0)$-form. For any non-zero $(n-k,0)$-form $\eta$, one has
\begin{align*}
\begin{split}
  & \quad P_\lambda(c(E,h^E))\wedge (\sqrt{-1})^{(n-k)^2}\eta\wedge\o{\eta}\\
  &=\left(\frac{1}{2\pi}\right)^k\left(\frac{1}{k!}\right)^2(\sqrt{-1})^{n^2}\sum_{\rho,t,c}\psi_{\rho t c \epsilon_1}\wedge\eta\wedge  \o{\psi_{\rho t c\epsilon_1}\wedge \eta},
 \end{split}
\end{align*}
which is a non-negative volume form, and so $ P_\lambda(c(E,h^E))$ is non-negative. Moreover, 
\begin{equation}\label{Schur-van}
  P_\lambda(c(E,h^E))\wedge (\sqrt{-1})^{(n-k)^2}\eta\wedge\o{\eta}=0
\end{equation}
if and only if 
\begin{equation}\label{psivan}
  \psi_{\rho t c \epsilon_1}\wedge \eta=0
\end{equation}
for any $\rho\in [1,r]^k$, $t\in T$, $c\in [1,N]^k$.
By the expression of $\psi_{\rho t c\epsilon_1}$, \eqref{psivan} becomes
\begin{align}\label{psivan1}
\begin{split}
  0&=\psi_{\rho t c \epsilon_1}\wedge \eta\\
   &=\sum_{\sigma\in S_k} q_{\sigma t}\o{B_{\rho_{\sigma(1)}c_1}}\wedge \cdots\wedge \o{B_{\rho_{\sigma(k)}c_k}}\wedge \eta\\
  &=\sum_{\sigma\in S_k} q_{\sigma t}\o{B_{\rho_{\sigma(1)}c_1\b{\alpha}_1}} \cdots \o{B_{\rho_{\sigma(k)}c_k\b{\alpha}_k}} dz^{\alpha_1}\wedge\cdots\wedge dz^{\alpha_k}\wedge \eta.
 \end{split}
\end{align}
Since $(E,h^E)$ is Nakano positive, by Corollary \ref{corNakano}, we can take $B$ such that $\mathbf{B}$ is invertible. Multiplying \eqref{psivan1} by $(\mathbf{B}^{-1})_{c_1,l_1\beta_1}\cdots (\mathbf{B}^{-1})_{c_k,l_k\beta_k}$ and summing on $c_1,\cdots,c_k$, one has
\begin{equation*}
\left( \sum_{\sigma\in S_k} q_{\sigma t}\delta_{\rho_{\sigma(1)}l_1}\cdots\delta_{\rho_{\sigma(k)}l_k}\right)dz^{\beta_1}\wedge \cdots\wedge dz^{\beta_k}\wedge \eta=0
\end{equation*}
for any $l=(l_1,\cdots,l_k)\in [1,r]^k$ and $\beta=(\beta_1,\cdots,\beta_k)\in [1,n]^k$. By choosing $\beta_1,\cdots,\beta_k$ such that $dz^{\beta_1}\wedge \cdots\wedge dz^{\beta_k}\wedge \eta\neq 0$, so 
\begin{equation}\label{psi7}
  \sum_{\sigma\in S_k} q_{\sigma t}\delta_{\rho_{\sigma(1)}l_1}\cdots\delta_{\rho_{\sigma(k)}l_k}=0,
\end{equation}
which is exactly \eqref{psi4}. By Remark \ref{rem:equ}, \eqref{psi7} is equivalent to $\psi_{\rho t c\epsilon_1}=0$. Hence \eqref{Schur-van} is equivalent to \eqref{psi7},
which follows that $P_\lambda(I_r)=0$, see \eqref{psi6}. By \eqref{contr}, $P_\lambda(I_r)\neq 0$, so we get a contradiction. Thus, $P_\lambda(c(E,h^E))$ is a positive $(k,k)$-form. 
Similarly, if  $(E,h^E)$ is dual Nakano positive, then $P_\lambda(c(E,h^E))$ is also a positive $(k,k)$-form.

 Hence, we can give an algebraic proof of the following positivity of Schur forms for (dual) Nakano positive vector bundles. 
\begin{theorem}[{Finski \cite[Theorem 1.1]{Fin}}]
	Let $(E, h^E)$  be a (dual) Nakano positive (respectively non-negative) vector bundle of rank $r$ over a complex manifold $X$ of dimension $n$. Then for any $k \in \mathbb{N}$, $k \leqslant n$, and $\lambda \in \Lambda(k, r)$, the $(k, k)$-form $P_\lambda\left(c\left(E, h^E\right)\right)$ is positive (respectively non-negative).
\end{theorem}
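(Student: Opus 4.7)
My plan is to deduce the theorem as a direct specialization of the machinery already set up for type I strong decomposable positivity, by noting that Nakano (resp. dual Nakano) positivity corresponds to taking $A=0$ (resp. $B=0$) in the curvature decomposition.

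First I would invoke Corollary \ref{corNakano}: if $(E,h^E)$ is Nakano positive, then with respect to some unitary frame the curvature matrix takes the form $R=-B\wedge\overline{B}^\top$, where $\mathbf{B}$ has full rank $rn$; the Nakano non-negative case corresponds to dropping the rank condition. Feeding $A=0$ into the identity \eqref{psi} for $\psi_{\rho t c\epsilon}$, all multi-indices $\epsilon\neq\epsilon_1:=(1,\dots,1)$ give $\psi_{\rho t c\epsilon}=0$, while for $\epsilon_1$ one obtains a pure $(k,0)$-form. Hence \eqref{schur forms} collapses to
\begin{equation*}
P_\lambda(c(E,h^E))=\left(\tfrac{1}{2\pi}\right)^k\left(\tfrac{1}{k!}\right)^2(\sqrt{-1})^{k^2}\sum_{\rho,t,c}\psi_{\rho t c\epsilon_1}\wedge\overline{\psi_{\rho t c\epsilon_1}}.
\end{equation*}
Because each $\psi_{\rho t c\epsilon_1}$ is a $(k,0)$-form, this is precisely the presentation \eqref{non-negative}, which immediately yields non-negativity of $P_\lambda(c(E,h^E))$ in the Nakano non-negative case.

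For positivity, assume further that $\mathbf{B}$ has full rank $rn$, and suppose for contradiction that for some non-zero $(n-k,0)$-form $\eta$,
\begin{equation*}
P_\lambda(c(E,h^E))\wedge(\sqrt{-1})^{(n-k)^2}\eta\wedge\overline{\eta}=0.
\end{equation*}
Since the wedge against $\eta\wedge\overline{\eta}$ of each term $\psi_{\rho t c\epsilon_1}\wedge\overline{\psi_{\rho t c\epsilon_1}}$ is a non-negative volume form, vanishing of the sum forces $\psi_{\rho t c\epsilon_1}\wedge\eta=0$ for all $\rho,t,c$. Expanding $\psi_{\rho t c\epsilon_1}$ in coordinates as in \eqref{psivan1}, and using the invertibility of $\mathbf{B}$ to solve for the $q$-coefficients, leads to the key identity \eqref{psi4}, namely
\begin{equation*}
\sum_{\sigma\in S_k}q_{\sigma t}\,\delta_{\rho_{\sigma(1)}l_1}\cdots\delta_{\rho_{\sigma(k)}l_k}=0
\end{equation*}
for all $\rho,l\in[1,r]^k$ and $t\in T$. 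As explained around \eqref{psi5}--\eqref{psi6}, this is equivalent to $P_\lambda(I_r)=0$, which contradicts the combinatorial identity \eqref{contr} giving $P_\lambda(I_r)\geq 1$.

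Finally, the dual Nakano case is handled by the mirror argument: by Corollary \ref{corNakano} the curvature becomes $R=A\wedge\overline{A}^\top$ with $\mathbf{A}$ of full rank, so now only $\epsilon_0:=(0,\dots,0)$ contributes in \eqref{psi}, and $\psi_{\rho t c\epsilon_0}$ is a $(0,k)$-form. Writing $P_\lambda(c(E,h^E))$ as a sum of $(\sqrt{-1})^{k^2}\psi\wedge\overline{\psi}$ with $\overline{\psi}$ of type $(k,0)$ (up to a global sign $(-1)^k$ which is absorbed since $k$ appears in $(-1)^{|\epsilon|+k}$ with $|\epsilon|=0$), the same dichotomy — non-negativity by inspection, positivity by the $P_\lambda(I_r)\neq 0$ contradiction — goes through verbatim. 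The only subtle point, and the one I would double-check most carefully, is tracking the sign $(-1)^{|\epsilon|+k}$ in \eqref{schur forms} when $|\epsilon|=0$ versus $|\epsilon|=k$, to confirm that each surviving summand is genuinely a non-negative volume form after wedging with $(\sqrt{-1})^{(n-k)^2}\eta\wedge\overline{\eta}$; this is essentially the content of \eqref{schur1} in the degenerate cases $i_0=n_0$ (Nakano) and $i_0=0$ (dual Nakano).
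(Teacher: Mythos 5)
Your proposal is correct and follows essentially the same route as the paper: specialize the curvature decomposition $R=-B\wedge\o{B}^\top+A\wedge\o{A}^\top$ to $A=0$ (resp.\ $B=0$) via Corollary \ref{corNakano}, observe that only $\epsilon_1=(1,\dots,1)$ (resp.\ $\epsilon_0=(0,\dots,0)$) survives in \eqref{psi} so that \eqref{schur forms} becomes a sum of $(\sqrt{-1})^{k^2}\psi\wedge\o{\psi}$ with $\psi$ of pure type, and then for positivity use the full rank of $\mathbf{B}$ (resp.\ $\mathbf{A}$) to reduce the vanishing condition to \eqref{psi4} and hence to $P_\lambda(I_r)=0$, contradicting \eqref{contr}. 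Your sign check for the $|\epsilon|=0$ case also resolves correctly, since $(-1)^{k}(\sqrt{-1})^{k^2}\psi\wedge\o{\psi}=(\sqrt{-1})^{k^2}\o{\psi}\wedge\o{(\o{\psi})}$ for a $(0,k)$-form $\psi$.
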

\begin{remark}
Note that S. Finski proved the above positivity of Schur forms by using the following two steps: the first one is a refinement of the determinantal formula of Kempf-Laksov on the level of differential forms, which expresses Schur forms as a certain pushforward of the
top Chern form of a Hermitian vector bundle obtained as a quotient of the tensor power of $(E,h^E)$, and the second one is to show the positivity of the top Chern form of a (dual) Nakano positive vector bundle. Our method here is an algebraic proof by analyzing the vanishing of Schur forms, which is very different from S. Finski's approach. 
\end{remark}

\section{Strongly decomposable positivity of type II}\label{strongly positive II}

In this section, we will consider the strongly decomposable positivity of type II,
 which is the direct sum of Nakano positive and dual Nakano positive vector bundles point-wise.  

\subsection{A criterion of type II positivity}

Let $(E,h^E)$ be a strongly decomposably positive vector bundle of type II, see Definition \ref{stongly-II}. 
 By Corollary \ref{corNakano}, with respect to a unitary frame $\{e_1,\cdots,e_{r_1}\}$ of $(E_{1,x},h^E|_{E_{1,x}})$, and a unitary frame $\{e_{r_1+1},\cdots,e_r\}$ of $(E_{2,x},h^E|_{E_{2,x}})$ at $x\in X$, one has
\begin{equation*}
  R^{E}_x|_{E_{1,x}}=-B_1\wedge\o{B_1}^\top, \quad R^{E}_x|_{E_{2,x}}=A_2\wedge \o{A_2}^\top,
\end{equation*}
with $\mathrm{rank}(\mathbf{B}_1)=\dim(E_{1,x})\cdot n$ and $\mathrm{rank}(\mathbf{A}_2)=\dim(E_{2,x})\cdot n$, where $B_1=((B_1)_{ip})_{1\leq i\leq r_1,1\leq j\leq N_1}$ is a matrix with $(0,1)$-forms as entries 
and $A_2=((A_2)_{ip})_{r_1+1\leq i\leq r,1\leq j\leq N_2}$ is a matrix with 
$(1,0)$-forms as entries. The matrices $\mathbf{B}_1$ and $\mathbf{A}_2$ are defined in \eqref{defn B} and \eqref{defn A} respectively.
Now we define the matrices $A_{r\times N}$ and $B_{r\times N}$ ($N=\max\{N_1,N_2\}$) by
\begin{equation}\label{AB}
  B=\begin{pmatrix}
  B_1& 0\\
  0& 0
\end{pmatrix},\quad A=\begin{pmatrix}
  0&0 \\
  A_2&0 
\end{pmatrix}.
\end{equation}
Then
\begin{equation*}
  \mr{rank}(\mathbf{B})=\mathrm{rank}(\mathbf{B}_1)=r_1\cdot n,\quad \mr{rank}(\mathbf{A})=\mathrm{rank}(\mathbf{A}_2)=(n-r_1)\cdot n
\end{equation*}
and
\begin{equation*}
  R^E_x=\begin{pmatrix}
  -B_1\wedge \o{B_1}^\top&0 \\
  0& A_2\wedge\o{A_2}^\top
\end{pmatrix}=-B\wedge\o{B}^\top+A\wedge \o{A}^\top.
\end{equation*}
For the matrix $\mathbf{B}=(B_{ip\b{\alpha}})_{i\alpha,p}$, we can associate it with another matrix $\mathcal{B}$ by
\begin{equation*}
   \mathcal{B}=(\mathcal{B}_{\alpha p})=\left(\sum_{i=1}^rB_{ip\b{\alpha}}e_i\right)_{\alpha p},
\end{equation*}
 which is a $n\times N$ matrix with elements of $E$ as entries. Similarly, we can define a $n\times N$ matrix by 
 \begin{equation*}
  \mathcal{A}=(\mathcal{A}_{\alpha p})=\left(\sum_{i=1}^r A_{ip\alpha}e_i\right)_{\alpha p}.
\end{equation*}
We define 
\begin{equation*}
\{\mc{B}\}:=\left\{\sum_{i=1}^rB_{ip\b{\alpha}}e_i,1\leq p\leq N, 1\leq\alpha\leq n\right\}
\end{equation*}
 and 
 \begin{equation*}
 \{\mathcal{A}\}:=\left\{\sum_{i=1}^rA_{ip{\alpha}}e_i,1\leq p\leq N, 1\leq\alpha\leq n\right\}.	
 \end{equation*}
By the definitions of the matrices $A$ and $B$, one has
\begin{equation*}
  \mathrm{span}_{\mb{C}}\{\mc{A}\}\perp \mathrm{span}_{\mb{C}}\{\mc{B}\}.
\end{equation*}
Hence, if $(E,h^E)$ is a strongly decomposably positive vector bundle of type II, then there are two $r\times N$-matrices $A$, $B$ of $(1,0)$-forms and $(0,1)$-forms respectively, such that with respect to a unitary frame $\{e_i\}_{1\leq i\leq r}$ of $E_x$,
\begin{equation}\label{4.2}
  R^E_x=-B\wedge \o{B}^\top+A\wedge\o{A}^\top,
\end{equation}
and 
\begin{equation}\label{4.3}
  \mathrm{span}_{\mb{C}}\{\mc{A}\}\perp \mathrm{span}_{\mb{C}}\{\mc{B}\}.
\end{equation}
Moreover, the ranks of $\mathbf{A}$ and $\mathbf{B}$ satisfy 
\begin{equation}\label{4.4}
   \mathrm{rank}(\mathbf{B})=r_1\cdot n, \quad \mathrm{rank}(\mathbf{A})=(r-r_1)\cdot n.
\end{equation}
\begin{remark}\label{remark:unitary2}
	If we consider a new unitary frame $\wt{e}=e\cdot a$ for a unitary matrix $a\in \mathrm{U}(r)$, by Remark \ref{remark:unitary}, one has
	\begin{equation}\label{4.1}
  \wt{R^E_x}=-\wt{B}\wedge \o{\wt{B}}^\top+\wt{A}\wedge\o{\wt{A}}^\top,
\end{equation}
with $\wt{B}=a^{-1}\cdot B$ and $\wt{A}=a^{-1}\cdot A$. Moreover, one has
\begin{align*}
\begin{split}
  \wt{\mc{B}}_{\alpha p}=\sum_{i=1}^r\wt{B}_{ip\b{\alpha}}\wt{e}_i=\sum_{i,j=1}^r(a^{-1})_{ij}B_{jp\b{\alpha}}\wt{e}_i=\sum_{j=1}^r B_{jp\b{\alpha}}{e}_j=\mc{B}_{\alpha p}.
 \end{split}
\end{align*}
Similarly, $\wt{\mc{A}}_{\alpha p}=\mc{A}_{\alpha p}$. Hence $\mc{A}$ and $\mc{B}$ are independent of the unitary frame. One can also check that 
\begin{equation*}
  \mathrm{rank}(\wt{\mathbf{B}})=\mathrm{rank}(\mathbf{B})=r_1\cdot n,\quad \mathrm{rank}(\wt{\mathbf{A}})=\mathrm{rank}(\mathbf{A})=(r-r_1)\cdot n.
\end{equation*}
In a word, we show that \eqref{4.2}--\eqref{4.4} hold for any unitary frame. 
\end{remark}
 
 Conversely, we assume that \eqref{4.2}--\eqref{4.4} hold for some unitary frame of $E_x$, $x\in X$. Set
 \begin{equation*}
  E_{1,x}:=\mathrm{span}_{\mb{C}}\{\mc{B}\},\quad E_{2,x}:=\mathrm{span}_{\mb{C}}\{\mc{A}\}.
\end{equation*}
Let $\{e_1,\cdots, e_{r'_1}\}$ be a unitary frame of $E_{1,x}$ and $\{e_{r_1'+1},\cdots, e_{r'}\}$ be a unitary frame of $E_{2,x}$. Since $E_{1,x}\perp E_{2,x}$, so 
$\{e_i\}_{1\leq i\leq r'}$ is a unitary frame of $E_{1,x}\oplus E_{2,x}$. Now we can extend the frame $\{e_i\}_{1\leq i\leq r'}$ and get a unitary frame $\{e_i\}_{1\leq i\leq r}$ of $E_x$.
 By Remark \ref{remark:unitary2}, \eqref{4.2}--\eqref{4.4} also hold for this unitary frame $\{e_i\}_{1\leq i\leq r}$.
Hence
\begin{equation*}
  R_{i\b{j}\alpha\b{\beta}} e_j\otimes \o{e_i}=\sum_{p=1}^N(-\mc{B}_{\beta p}\otimes \o{\mc{B}_{\alpha p}}+\mc{A}_{\alpha p}\otimes \o{\mc{A}_{\beta p}}).
\end{equation*}
So
\begin{equation*}
  R^E_x|_{E_{1,x}}=-B\wedge\o{B}^\top,\,\, R^E_x|_{E_{2,x}}=A\wedge \o{A}^\top
\end{equation*}
and 
\begin{equation*}
  R_{i\b{j}\alpha\b{\beta}}=0\text{ for any } (i,j) \text{ or } (j,i)\in [1,r_1']\times [r_1'+1,r].
\end{equation*}
By \eqref{4.4}, one has
\begin{equation*}
  r_1'\geq r_1,\quad r-r_1'\geq r'-r'_1\geq r-r_1,
\end{equation*}
which follows that
\begin{equation*}
  r_1=r_1',\quad r'=r.
\end{equation*}
Hence $E_x=E_{1,x}\oplus E_{2,x}$. By Corollary \ref{corNakano}, we obtain that
 $R_{i\b{j}\alpha\b{\beta}}u^{i\alpha}\o{u^{j\beta}}>0$ for any non-zero $u=u^{i\alpha} e_i\otimes \p_\alpha\in E_{1,x}\otimes T^{1,0}_xX$, $R_{i\b{j}\alpha\b{\beta}}v^{i\b{\beta}}\o{v^{j\b{\alpha}}}>0$ for any non-zero $v=v^{i\b{\beta}}e_i\otimes \p_{\b{\beta}}\in E_{2,x}\otimes T^{0,1}_xX$. Thus, $(E,h^E)$ is a strongly decomposably positive vector bundle of type II. 
 
 In a word, we obtain a criterion of a strongly decomposably positive vector bundle of type II. 
\begin{theorem}\label{prop-criterion}
	$(E,h^E)$ is a strongly decomposably positive vector bundle of type II if and only if it satisfies \eqref{4.2}--\eqref{4.4}.
\end{theorem}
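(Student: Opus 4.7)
The plan is to establish both directions of the biconditional by reducing each to Corollary \ref{corNakano}, which characterizes Nakano positivity (respectively dual Nakano positivity) through the existence of a matrix $B$ of $(0,1)$-forms (respectively $A$ of $(1,0)$-forms) with the right rank condition. The Hermitian orthogonality built into Definition \ref{stongly-II} lets the curvature split as an uncoupled direct sum, so working inside each summand reduces strongly decomposable positivity of type II to two independent instances of (dual) Nakano positivity.

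For the forward direction, starting from the orthogonal decomposition $E_x=E_{1,x}\oplus E_{2,x}$, I would pick unitary frames $\{e_1,\ldots,e_{r_1}\}$ of $E_{1,x}$ and $\{e_{r_1+1},\ldots,e_r\}$ of $E_{2,x}$ and concatenate them to a unitary frame of $E_x$. The block-diagonal curvature of Definition \ref{stongly-II} lets me apply Corollary \ref{corNakano} to each block separately, producing $B_1$ with $\mathrm{rank}(\mathbf{B}_1)=r_1\cdot n$ and $A_2$ with $\mathrm{rank}(\mathbf{A}_2)=(r-r_1)\cdot n$. I would then assemble global $r\times N$ matrices $A$ and $B$ as in \eqref{AB}, padding with zeros so the two blocks live on disjoint coordinate sets of $E_x$. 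Equation \eqref{4.2} is then immediate; \eqref{4.3} follows since the columns of $\mathcal{B}$ lie in $E_{1,x}$ and those of $\mathcal{A}$ lie in $E_{2,x}$; and \eqref{4.4} is inherited from the ranks of $B_1$, $A_2$. Independence from the choice of unitary frame is handled by Remark \ref{remark:unitary2}.

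For the converse, assuming \eqref{4.2}--\eqref{4.4} hold in some unitary frame, I would \emph{define} $E_{1,x}:=\mathrm{span}_{\mathbb{C}}\{\mathcal{B}\}$ and $E_{2,x}:=\mathrm{span}_{\mathbb{C}}\{\mathcal{A}\}$, which are orthogonal by \eqref{4.3}. Choosing a unitary frame adapted to this decomposition and extending it to a unitary frame of $E_x$ (invoking Remark \ref{remark:unitary2} to preserve \eqref{4.2}--\eqref{4.4}), I would read off from \eqref{4.2} that the curvature is block-diagonal with a $-B\wedge\bar{B}^\top$ block on $E_{1,x}$ and an $A\wedge\bar{A}^\top$ block on $E_{2,x}$, then apply Corollary \ref{corNakano} in reverse to each block to obtain Nakano positivity on $E_{1,x}$ and dual Nakano positivity on $E_{2,x}$. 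The main obstacle is ensuring the sum $E_{1,x}\oplus E_{2,x}$ actually exhausts all of $E_x$ rather than sitting as a proper subspace; this is precisely where the rank hypothesis \eqref{4.4} does the work. Writing $r'_1:=\dim E_{1,x}$ and $r':=\dim(E_{1,x}\oplus E_{2,x})$, the rank conditions force $r'_1\geq r_1$ and $r-r'_1\geq r'-r'_1\geq r-r_1$, which together pin down $r'_1=r_1$ and $r'=r$. Without the exact equalities in \eqref{4.4} one could neither exhaust $E_x$ nor obtain the strict positivity part of Corollary \ref{corNakano}, so that step is the pivot of the argument.
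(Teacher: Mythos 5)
Your proposal is correct and follows essentially the same route as the paper: both directions reduce to Corollary \ref{corNakano} via the block assembly \eqref{AB} in the forward direction and the definitions $E_{1,x}=\mathrm{span}_{\mathbb{C}}\{\mathcal{B}\}$, $E_{2,x}=\mathrm{span}_{\mathbb{C}}\{\mathcal{A}\}$ in the converse, with the same chain of inequalities $r_1'\geq r_1$ and $r-r_1'\geq r'-r_1'\geq r-r_1$ extracted from \eqref{4.4} to show the decomposition exhausts $E_x$. No gaps to report.
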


\subsection{Positivity of Schur forms}

In this subsection, we will consider the positivity of Schur forms for strongly decomposably positive vector bundles of type II. 

Let $E$ and $F$ be two holomorphic vector bundles over a complex manifold $X$, $\mathrm{rank}(E)=r$ and $\mathrm{rank}(F)=q$. Let $x_1,\cdots,x_{r}$ denote the Chern roots of $E$. For any partition $\lambda'$ satisfying \eqref{conj-part}, we denote 
\begin{equation*}
  s_{\lambda'}(c(E)):=s_{\lambda'}(x_1,\cdots,x_r)\in \mathrm{H}^{2k}(X,\mb{R}),
\end{equation*}
where $s_{\lambda'}(x_1,\cdots,x_r)$ is defined in \eqref{Schur function}, which is also called a Schur class. Similarly, one can define the cohomology classes $s_{\lambda'}(c(F))$ and $s_{\lambda'}(c(E\oplus F))$. For these cohomology classes, by Littlewood-Richardson rule, see \cite[Proposition 3.3 (3.14)]{MR4338228}, one has
\begin{equation*}
  s_{\lambda'}(c(E\oplus F))=\sum_{\mu',\nu'}c^{\lambda'}_{\mu'\nu'}s_{\mu'}(c(E))s_{\nu'}(c(F)),
\end{equation*}
where $c^{\lambda'}_{\mu'\nu'}$ is a Littlewood-Richardson coefficient. One can refer to \cite[Chapter 5]{MR1464693} for more details on the Littlewood-Richardson coefficients. By \eqref{Schur-equ}, the Schur class $P_\lambda(c(E\oplus F))$ of the direct sum $E\oplus F$ satisfies
\begin{equation}\label{4.5}
  P_\lambda(c(E\oplus F))=\sum_{\mu',\nu'}c^{\lambda'}_{\mu'\nu'}P_\mu(c(E))P_\nu(c(F))=\sum_{\mu,\nu}c^\lambda_{\mu\nu}P_\mu(c(E))P_\nu(c(F)),
\end{equation}
where $\lambda,\mu,\nu$ are the conjugate partitions to $\lambda',\mu',\nu'$ respectively, the last equality follows from the conjugation symmetry $c^{\lambda'}_{\mu'\nu'}=c^\lambda_{\mu\nu}$, see e.g. \cite[Page 115]{MR1904379}. 

Let $h^E$ and $h^F$ be Hermitian metrics on $E$ and $F$, respectively. The direct sum $E\oplus F$ is equipped with the natural metric $h^E\oplus h^F$. Now we can prove \eqref{4.5} on the level of differential forms. 
\begin{proposition}\label{prop:product}
For any $\lambda\in \Lambda(k,r)$, one has
\begin{equation*}
  P_\lambda(c(E\oplus F,h^E\oplus h^F))=\sum_{\mu,\nu}c^\lambda_{\mu\nu}P_\mu(c(E,h^{E}))\wedge P_\nu(c(F,h^{F})).
\end{equation*}
\end{proposition}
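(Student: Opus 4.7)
The plan is to upgrade the cohomological Littlewood--Richardson identity \eqref{4.5} to an identity of differential forms by exploiting two elementary facts: the Whitney sum formula holds \emph{on the nose} (not merely modulo exact forms) for the orthogonal direct sum, and Chern forms commute pairwise under the wedge product.

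First I would verify the form-level Whitney identity. With respect to any local holomorphic frame adapted to the decomposition $E \oplus F$, the Chern curvature of $(E \oplus F, h^E \oplus h^F)$ is block diagonal,
$$R^{E \oplus F} = \begin{pmatrix} R^E & 0 \\ 0 & R^F \end{pmatrix}.$$
Because every entry is a $(1,1)$-form, the entries of $\mathrm{Id} + \tfrac{\sqrt{-1}}{2\pi} R^{E \oplus F}$ all lie in the commutative subalgebra of even-degree forms, so the block-diagonal determinant factors, giving
$$c(E \oplus F, h^E \oplus h^F) = c(E, h^E) \wedge c(F, h^F),$$
and hence $c_k(E \oplus F, h^E \oplus h^F) = \sum_{a+b=k} c_a(E, h^E) \wedge c_b(F, h^F)$ for every $k$.

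Next I would read the Littlewood--Richardson rule as a universal polynomial identity. Treating $c_i^E$ and $c_j^F$ as independent commuting formal variables and setting $c_k^\oplus := \sum_{a+b=k} c_a^E c_b^F$, the Littlewood--Richardson rule, combined with the Jacobi--Trudi identity \eqref{Schur-equ}, gives
$$P_\lambda\bigl(c_1^\oplus, \ldots, c_{r+q}^\oplus\bigr) = \sum_{\mu, \nu} c^\lambda_{\mu\nu} \, P_\mu(c_1^E, \ldots, c_r^E) \, P_\nu(c_1^F, \ldots, c_q^F)$$
as an identity in the commutative polynomial ring $\mathbb{Z}[c_i^E, c_j^F]$; this is precisely the content of \eqref{4.5} read on the universal base, proven by passing to the disjoint union of two alphabets of Chern roots and applying the standard factorization of Schur functions.

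Because Chern forms have even total degree, they commute under $\wedge$, so the universal polynomial identity above evaluates unambiguously after the substitution $c_i^E \mapsto c_i(E, h^E)$, $c_j^F \mapsto c_j(F, h^F)$. The form-level Whitney identity from the first step then identifies the left-hand side with $P_\lambda(c(E \oplus F, h^E \oplus h^F))$, completing the proof. There is no genuine analytic obstacle here; the whole argument is essentially a bookkeeping exercise, and the one point requiring care is to verify that every manipulation used in establishing the polynomial LR identity takes place inside a commutative polynomial ring, so that it transfers verbatim to the commutative subalgebra of even-degree differential forms on $X$.
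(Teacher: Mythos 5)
Your proof is correct, but at the decisive step it takes a genuinely different route from the paper's. Both arguments start from the form-level Whitney identity $c(E\oplus F,h^E\oplus h^F)=c(E,h^E)\wedge c(F,h^F)$, which reduces the proposition to a polynomial identity among the commuting even-degree forms $c_i(E,h^E)$ and $c_j(F,h^F)$. The paper does not verify that polynomial identity directly: following Guler and Diverio--Fagioli, it writes the difference of the two sides as a universal expression $\sum f_{i_1\cdots i_rj_1\cdots j_q}\,c_1(E,h^E)^{i_1}\wedge\cdots\wedge c_q(F,h^F)^{j_q}$, invokes the cohomological Littlewood--Richardson identity \eqref{4.5} to see that the corresponding class vanishes for every $X$, $E$, $F$, and then kills the universal coefficients by testing on split bundles $E=A^{\otimes m_1}\oplus\cdots\oplus A^{\otimes m_r}$, $F=A^{\otimes m_{r+1}}\oplus\cdots\oplus A^{\otimes m_{r+q}}$ over a projective manifold. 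You instead observe that the desired identity already holds in the free commutative polynomial ring $\mathbb{Z}[c_1^E,\ldots,c_r^E,c_1^F,\ldots,c_q^F]$ --- it is the factorization $s_{\lambda'}(x,y)=\sum_{\mu',\nu'}c^{\lambda'}_{\mu'\nu'}s_{\mu'}(x)s_{\nu'}(y)$ of Schur polynomials in two alphabets, transported through the Jacobi--Trudi identity and the algebraic independence of the elementary symmetric polynomials $e_a(x)$, $e_b(y)$ --- and that a polynomial identity may be evaluated on any commuting elements of a commutative ring, in particular on Chern forms. Your route is shorter and purely algebraic: no auxiliary projective manifold, no ample line bundle, no linear-independence argument for the test classes. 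What the paper's route buys is that it consumes \eqref{4.5} only as a cohomological black box, whereas yours re-derives it from its symmetric-function source. The one point to make explicit in your write-up is the passage from the identity of bisymmetric polynomials in the Chern roots to the identity in the free polynomial ring, together with the consistent conventions $c_a^E=0$ for $a>r$ and $c_b^F=0$ for $b>q$ (matching $s_{\mu'}(x_1,\ldots,x_r)=0$ when $\mu'$ has more than $r$ parts); with that stated, every step is sound.
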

\begin{proof}
We will follow the method in the proofs of \cite[Proposition 3.1]{MR2932990} and \cite[Theorem 3.5]{DF}.
From the definition of total Chern form, one has
\begin{equation*}
  c(E\oplus F,h^E\oplus h^F)=c(E,h^E)\wedge c(F,h^F).
\end{equation*}
Recall that $P_\lambda\left(c_1, \ldots, c_r\right)=\operatorname{det}\left(c_{\lambda_i-i+j}\right)_{1 \leqslant i, j \leqslant k}$, so that 
\begin{align*}
  &\quad  P_\lambda(c(E\oplus F,h^E\oplus h^F))- \sum_{\mu,\nu}c^\lambda_{\mu\nu}P_\mu(c(E,h^{E}))\wedge P_\nu(c(F,h^{F}))\\
  =&\sum_{i_1+2i_2+\cdots+ri_r\atop+j_1+2j_2+\cdots+qj_q=k}f_{i_1\cdots i_r j_1\cdots j_{q}}c_1(E,h^E)^{i_1}\wedge \cdots\wedge c_r(E,h^E)^{i_r}\\
   &\wedge c_1(F,h^F)^{j_1}\wedge \cdots\wedge c_q(F,h^F)^{j_q}.
\end{align*}
where the universal coefficients $f_{i_1\cdots i_rj_1\dots j_q}$ do not depend on $E,F$ and $X$, just depend $r,q$, $P_\lambda$. By \eqref{4.5}, then the cohomology class satisfies
\begin{align*}
\begin{split}
 &\left[\sum_{i_1+2i_2+\cdots+ri_r\atop+j_1+2j_2+\cdots+qj_q=k}f_{i_1\cdots i_r j_1\cdots j_{q}}c_1(E,h^E)^{i_1}\wedge \cdots\wedge c_r(E,h^E)^{i_r}\right.\\
 &\left.\wedge c_1(F,h^F)^{j_1}\wedge \cdots\wedge c_q(F,h^F)^{j_q}\right]=0\\
 \end{split}
\end{align*}
Now we can take $X$ as any $n$-dimensional projective manifold and fix an ample line bundle $A$ on $X$. Let $\omega_A$ be a metric on $A$ with positive curvature. For $m_1,\cdots,m_r,m_{r+1},\cdots,m_{r+q}$ positive integers, we define
\begin{equation*}
  E=A^{\otimes m_1}\oplus \cdots\oplus A^{\otimes m_r},\quad F=A^{\otimes m_{r+1}}\oplus\cdots\oplus A^{\otimes m_{r+q}}.
\end{equation*}
By the same proof as in \cite[ Page 14]{DF}, one can show all universal coefficients $f_{i_1\cdots i_r j_1\cdots j_{q}}$ vanish, which follows that
\begin{equation*}
  P_\lambda(c(E\oplus F,h^E\oplus h^F))- \sum_{\mu,\nu}c^\lambda_{\mu\nu}P_\mu(c(E,h^{E}))\wedge P_\nu(c(F,h^{F}))=0,
\end{equation*}
which completes the proof.
\end{proof}

 Now we assume $(E,h^E)$ is a strongly decomposably positive vector bundle of type II, for any $x\in X$, there exists an orthogonal decomposition of $E_x$,
 \begin{equation*}
  E_x=E_{1,x}\oplus E_{2,x},
\end{equation*}
and the Chern curvature $R^E_x$ has the form 
\begin{equation*}
  R^E_x=\begin{pmatrix}
  R^{E}_x|_{E_{1,x}}&0 \\
  0& R^{E}_x|_{E_{2,x}}
\end{pmatrix}.
\end{equation*}
Let $\{e_1,\cdots, e_{r_1}\}$ be a unitary frame of $(E_{1,x},h^E|_{E_{1,x}})$ and $\{e_{r_1+1},\cdots, e_{r}\}$ be a unitary frame of $(E_{2,x},h^E|_{E_{2,x}})$. Let $(U,\{z^\alpha\}_{1\leq \alpha\leq n})$ be a local coordinate neighborhood around $x$, and denote by $E_1=U\times E_{1,x}$ the locally trivial bundle, and $\{e_i\}_{1\leq i\leq r_1}$ also gives a frame of $E_1$.
Now we define the following Hermitian metric on $E_1$ by
\begin{equation*}
 h^{E_1}( e_i,e_j):= \delta_{ij}- R_{i\b{j}\alpha\b{\beta}} z^\alpha\b{z}^\beta,\quad 1\leq i,j\leq r_1,
\end{equation*}
which is a Hermitian metric by taking $U$ small enough.
Then $(E_1,h^{E_1})$ is a Hermitian vector bundle around $x$ and satisfies
$$
  R^{E_1}_x=R^E_x|_{E_{1,x}}.
$$
Similarly, one can define a Hermitian vector bundle $(E_2,h^{E_2})$ such that 
$
  R^{E_2}_x=R^E_x|_{E_{2,x}}.
$ 
Hence
\begin{align*}
\begin{split}
  c(E_1\oplus E_2,h^{E_1}\oplus h^{E_2})|_x&=\det\left(\mr{Id}_r+\frac{\sqrt{-1}}{2\pi}\begin{pmatrix}
  R^{E_1}_x&0 \\
  0& R^{E_2}_x
\end{pmatrix}\right)\\
&=\det\left(\mr{Id}_r+\frac{\sqrt{-1}}{2\pi}\begin{pmatrix}
  R^{E}|_{E_{1,x}}&0 \\
  0& R^{E}|_{E_{2,x}}
\end{pmatrix}\right)\\
&=\det\left(\mr{Id}_r+\frac{\sqrt{-1}}{2\pi}R^E_x\right)\\
&=c(E,h^E)|_x,
 \end{split}
\end{align*}
which follows that
\begin{align*}
\begin{split}
P_\lambda(c(E,h^E))|_x=P_\lambda(c(E_1\oplus E_2,h^{E_1}\oplus h^{E_2}))|_x.
 \end{split}
\end{align*}
By Proposition \ref{prop:product}, one has 
\begin{align}\label{4.6}
\begin{split}
  P_\lambda(c(E,h^E))|_x&=\sum_{\mu,\nu}c^\lambda_{\mu\nu}P_\mu(c(E_1,h^{E_1}))|_x\wedge P_\nu(c(E_2,h^{E_2}))|_x.
 \end{split}
\end{align}
Since $(E_1,h^{E_1})$ is Nakano positive and $(E_2,h^{E_2})$ is dual Nakano positive at 
$x$, so $P_\mu(c(E_1,h^{E_1}))|_x$ and $P_\nu(c(E_2,h^{E_2}))|_x$ are positive forms. Since the Littlewood-Richardson coefficients $c^\lambda_{\mu\nu}$ are non-negative integers, see \cite[Corollary 1 in Chapter 5]{MR1464693}, so the each summand 
\begin{equation*}
  c^\lambda_{\mu\nu}P_\mu(c(E_1,h^{E_1}))|_x\wedge P_\nu(c(E_2,h^{E_2}))|_x
\end{equation*}
in RHS of \eqref{4.6} is non-negative. On the other hand, for any $\lambda,\mu,\nu$ satisfying $\lambda_i=\mu_i+\nu_i$ for all $i$, then $c^{\lambda}_{\mu,\nu}=1$, see \cite[Page 66]{MR1464693}, and $$P_\mu(c(E_1,h^{E_1}))|_x\wedge P_\nu(c(E_2,h^{E_2}))|_x$$
 is a positive $(|\lambda|,|\lambda|)$-form by Proposition \ref{product}. By \eqref{4.6}, we show that the Schur form 
$P_\lambda(c(E,h^E))|_x$ is a positive $(|\lambda|,|\lambda|)$-form. 
\begin{theorem}
	Let $(E,h^E)$ be a strongly decomposably positive vector bundle of type II over a complex manifold $X$, $\mathrm{rank} E=r$, and $\dim X=n$. Then the Schur form 
	$P_\lambda(c(E,h^E))$ is positive for any partition $\lambda\in \Lambda(k,r)$, $k\leq n$ and $k\in\mb{N}$.
\end{theorem}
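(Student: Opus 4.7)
The plan is to reduce the positivity of $P_\lambda(c(E,h^E))$ at each point $x \in X$ to a non-negative combination of wedge products of Schur forms of a (locally defined) Nakano positive bundle and a dual Nakano positive bundle, then invoke Theorem \ref{main-cor} together with the product positivity of Proposition \ref{product}. Since the block-diagonal curvature decomposition in Definition \ref{stongly-II} already tells us that $R^E_x = R^E|_{E_{1,x}} \oplus R^E|_{E_{2,x}}$ with the first summand Nakano positive and the second dual Nakano positive, the heart of the matter is to upgrade the classical Littlewood--Richardson identity to the level of differential forms.

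First, I would establish the pointwise form-level identity
\[
P_\lambda(c(E\oplus F, h^E \oplus h^F)) = \sum_{\mu,\nu} c^\lambda_{\mu\nu}\, P_\mu(c(E,h^E)) \wedge P_\nu(c(F,h^F)),
\]
for arbitrary Hermitian holomorphic $(E,h^E)$ and $(F,h^F)$. The standard strategy (as in Guler and Diverio--Fagioli) is to observe that the difference is a universal polynomial in the Chern forms of $E$ and $F$ with universal coefficients depending only on $\lambda$, $r$, $q$; the cohomological L--R identity forces this polynomial to be cohomologically trivial. Specializing to $E = \bigoplus_i A^{\otimes m_i}$ and $F = \bigoplus_j A^{\otimes m_j}$ on a projective manifold with $A$ ample gives enough linearly independent equations in the universal coefficients to conclude that they all vanish.

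Next, for a fixed point $x \in X$ I would construct local model bundles $(E_1,h^{E_1})$ and $(E_2,h^{E_2})$ on a coordinate neighborhood of $x$ whose Chern curvatures at $x$ reproduce $R^E|_{E_{1,x}}$ and $R^E|_{E_{2,x}}$ respectively. Using local holomorphic coordinates $\{z^\alpha\}$ centered at $x$, the metric
\[
h^{E_i}(e_k,e_l) := \delta_{kl} - R^E_{k\bar{l}\alpha\bar{\beta}}(x)\, z^\alpha \bar{z}^\beta
\]
on a trivial bundle (with the appropriate index ranges) achieves this, and by the type II hypothesis $(E_1,h^{E_1})$ is Nakano positive at $x$ while $(E_2,h^{E_2})$ is dual Nakano positive at $x$. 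Since the curvatures coincide at $x$, so do the total Chern forms, and hence
\[
P_\lambda(c(E,h^E))|_x = P_\lambda(c(E_1 \oplus E_2, h^{E_1} \oplus h^{E_2}))|_x = \sum_{\mu,\nu} c^\lambda_{\mu\nu}\, P_\mu(c(E_1,h^{E_1}))|_x \wedge P_\nu(c(E_2,h^{E_2}))|_x.
\]

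Finally, I would conclude by applying Theorem \ref{main-cor} to each factor: $P_\mu(c(E_1,h^{E_1}))|_x$ and $P_\nu(c(E_2,h^{E_2}))|_x$ are positive forms, so by Proposition \ref{product} their wedge products are positive of total bidegree $(k,k)$. Since the Littlewood--Richardson coefficients $c^\lambda_{\mu\nu}$ are non-negative integers, every summand is non-negative. Strict positivity of the sum follows from exhibiting at least one term with $c^\lambda_{\mu\nu} \geq 1$; taking any splitting $\mu_i + \nu_i = \lambda_i$ coordinate-wise yields $c^\lambda_{\mu\nu} = 1$, so at least one strictly positive summand is present. The main obstacle is the first step: lifting the Littlewood--Richardson identity from cohomology to differential forms is where nearly all the non-trivial work lies, and getting the universal-coefficient argument on ample line bundle sums to produce enough independent relations is the delicate point; once that refinement is in hand, the rest of the argument is structural.
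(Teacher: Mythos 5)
Your proposal is correct and follows essentially the same route as the paper: the form-level Littlewood--Richardson identity proved via universal coefficients and specialization to sums of powers of an ample line bundle, the local model bundles with metric $\delta_{kl}-R_{k\b{l}\alpha\b{\beta}}z^\alpha\b{z}^\beta$ reproducing the curvature blocks at $x$, and then positivity from Theorem \ref{main-cor}, Proposition \ref{product}, non-negativity of the $c^\lambda_{\mu\nu}$, and the coordinate-wise splitting $\lambda=\mu+\nu$ giving $c^\lambda_{\mu\nu}=1$. No substantive differences from the paper's argument.
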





\bibliographystyle{alpha}
\bibliography{Positivity}
\end{document}